\numberwithin{equation}{section}
\numberwithin{figure}{section}
\theoremstyle{definition}
\newtheorem{defn}{\protect\definitionname}
\theoremstyle{remark}
\newtheorem{rem}{\protect\remarkname}
\theoremstyle{plain}
\newtheorem{thm}{\protect\theoremname}
  \newenvironment{proof}[1][\proofname]{\par
    \normalfont\topsep6\p@\@plus6\p@\relax
    \trivlist
    \itemindent\parindent
    \item[\hskip\labelsep
          \scshape
      #1]\ignorespaces
  }{%
    \endtrivlist\@endpefalse
  }
  \providecommand{\proofname}{Proof}
\theoremstyle{plain}
\newtheorem{lem}{\protect\lemmaname}
\theoremstyle{plain}
\newtheorem{cor}{\protect\corollaryname}
\theoremstyle{definition}
\newtheorem{example}{\protect\examplename}
\providecommand{\corollaryname}{Corollary}
\providecommand{\definitionname}{Definition}
\providecommand{\examplename}{Example}
\providecommand{\lemmaname}{Lemma}
\providecommand{\remarkname}{Remark}
\providecommand{\theoremname}{Theorem}
\begin{document}
\title[The convergence and uniqueness of a nonlinear Markov chain]{The convergence and uniqueness of a discrete-time nonlinear Markov
chain}
\author{Ruowei Li and Florentin M\"unch}
\address{Ruowei Li: Department of Statistics and Data Science, National University of Singapore, 117546 Singapore; Shanghai Center for Mathematical Sciences, Jiangwan
Campus, Fudan University, No. 2005 Songhu Road, 200438 Shanghai, China;
Max Planck Institute for Mathematics in the Sciences, 04103 Leipzig, Germany.}
\email{ruoweili@nus.edu.sg; rwli19@fudan.edu.cn}
\address{Florentin M\"unch: Institute for Mathematics, Leipzig University, 04109 Leipzig, Germany.}
\email{cfmuench@gmail.com}
\begin{abstract}
In this paper, we prove the convergence and uniqueness of a general
discrete-time nonlinear Markov chain with specific conditions. The
results have important applications in discrete differential geometry.
First, we prove the discrete-time
Ollivier Ricci curvature flow $d_{n+1}\coloneqq(1-\alpha\kappa_{d_{n}})d_{n}$ 
converges to a constant curvature metric on a finite weighted graph. As shown in \cite[Theorem 5.1]{M23}, a Laplacian separation principle holds on a locally finite graph with nonnegative Ollivier curvature. We further prove that the Laplacian
separation flow converges to the constant Laplacian solution and generalize
the result to nonlinear $p$-Laplace operators. Moreover, our results
can also be applied to study the long-time behavior in the nonlinear
Dirichlet forms theory and nonlinear Perron-Frobenius theory. Finally, we define the Ollivier Ricci curvature of the nonlinear Markov chain
which is consistent with the classical Ollivier Ricci curvature, sectional
curvature \cite{CMS24}, coarse Ricci curvature on hypergraphs \cite{IKTU21}
and the modified Ollivier Ricci curvature for $p$-Laplace. We also establish
 the convergence results for the nonlinear Markov chain with
nonnegative Ollivier Ricci curvature. 
\end{abstract}

\maketitle
\tableofcontents{}

\section{Introduction}

A nonlinear Markov chain, introduced by McKean \cite{M66} to tackle
mechanical transport problems, is a discrete space dynamical system
generated by a measure-valued operator that preserves positivity. Compared with the linear Markov chain, its
transition probability is dependent not only on the state but also
on the distribution of the process.

Understanding the long-time behavior of Markov chains is a fundamental problem. A classical result is that an irreducible lazy linear Markov
chain converges to its unique stationary distribution in the total
variation distance \cite{LP17,S09}. For the nonlinear case, Kolokoltsov
\cite{K10} and BA Neumann \cite{N23} 
studied the long-term behavior
of nonlinear Markov chains defined on probability simplex whose transition
probabilities are a family of stochastic matrices. Long-term results
exist for specific continuous-time Markov chains associated with pressure
and resistance games \cite{KM19} and ergodicity criteria for discrete-time
Markov processes \cite{B14,S16}. This paper establishes convergence
and uniqueness results for a general discrete-time nonlinear Markov
chain $P:\Omega\to\Omega$ under some of the following specific conditions:
\begin{itemize}
	\item Conditions on the domain
	\begin{itemize}
		\item[(A)] $\Omega\subseteq\mathbb{R}^{N}$ is closed.
		\item[(B)] 
        $\Omega+r\cdot \overrightarrow{1}=\Omega$ for all $r\in\mathbb{R}$, where  $\overrightarrow{1}=(1,\ldots,1)\in \mathbb{R}^{N}$.
	\end{itemize}
    \end{itemize}
   
    We now introduce the following properties for all $f,g\in \Omega$:
    \begin{itemize}
    \item Basic properties
	\begin{itemize}
			\item Monotonicity
			\begin{itemize}
				\item[(1)] Monotonicity: $Pf\geq Pg$ if $f\geq g$,
                where $f\geq g$ means $f(x)\geq g(x)$ for all components $x = 1,2,\ldots,N.$
				
				\item[(2)] Strict monotonicity of corresponding components: $Pf(x)>Pg(x)$ if $f\geq g$ and $f(x)>g(x)$
				for some component $x \in \{1, \ldots, N\}$.
				
				\item[(3)] Uniform strict monotonicity: $Pf\geq Pg+\epsilon_{0}(f-g)$ if
				$f\geq g$ for some fixed positive $\epsilon_{0}$.
			\end{itemize}
			\item Additivity
			\begin{itemize}
				\item[(4)] Constant additivity: $P(f+C\cdot \overrightarrow{1})=Pf+C\cdot \overrightarrow{1}$, where $C \in \mathbb{R}$ is a constant.
	    	\end{itemize}
    		\item Non-expansion
        	\begin{itemize}
    		\item[(5)] Non-expansion: $\left\Vert Pf-Pg\right\Vert _{\ell^{\infty}}\leq\left\Vert f-g\right\Vert _{\ell^{\infty}}$
    		for all $f,g\in\Omega$.
    	   \end{itemize}
       \end{itemize}
		\item Connectedness
		  \begin{itemize}
		  	\item[(6)] Connectedness: there exists $n_{0}\in\mathbb{N}_{+}$ such that
		  	for every component $x,$ and $f\geq g$ with $f(x)>g(x)$,
		  	we have $P^{n_{0}}f>P^{n_{0}}g$, (i.e., the strict inequality holds component-wise).
		  	\item[(7)] Uniform connectedness: there exists $n_{0}\in\mathbb{N}_{+}$,
		  	positive $\epsilon_{0}$ such that for every component $x$, positive
		  	$\delta$ and $f\geq g+\delta\cdot1_{x}$ (where $1_{x}\in \mathbb{R^{N}}$ and $1_{x}(x)=1$, and $1_{x}(y)=0$ for $y\neq x$),
		  	we have $P^{n_{0}}f\geq P^{n_{0}}g+\epsilon_{0}\delta$.
		  \end{itemize}
	    \item  Accumulation points
	      \begin{itemize}
	      	\item[(8)] Accumulation point at infinity: there exists a component $x_{0} \in \{1, \ldots, N\}$ such that $f_{n}\coloneqq P^{n}f-P^{n}f(x_{0})\cdot \overrightarrow{1}$
	      	has a finite accumulation point $g$, i.e. for every $n\in\mathbb{N}_{+}$
	      	and positive $\epsilon$, there exists $N>n$ such that $\left\Vert f_{N}-g\right\Vert _{\ell^{\infty}}<\epsilon$.
	      	\item[(9)] Finite accumulation point: $P^{n}f$ has a finite accumulation
	      	point $g$. 
	      \end{itemize}
	\end{itemize}

\begin{defn}
A discrete-time nonlinear Markov chain $P:\Omega\to\Omega$ is a map
satisfying monotonicity (1) and non-expansion (5) where $\Omega$ satisfies
(A) and (B). 
\end{defn}
In the theorems, we always reiterate the assumptions (1) and (5),
even though the conditions are implicitly given by the definition. 
\begin{rem}
\label{rem: NLMC }(a) For a linear Markov chain, monotonicity (1) and strict monotonicity of corresponding components
(2) imply that $P$ is lazy, meaning it remains in the same
state with positive probabilities.

(b) Uniform strict monotonicity (3) is stronger than monotonicity
(1) and strict monotonicity of corresponding components (2), which means that (3) implies (1)
and (2).

(c) Monotonicity (1) and constant additivity (4) imply the property of positivity preservation in McKean's work \cite{M66}.

(d) Since $f\leq g+\left\Vert f-g\right\Vert _{{\infty}}\cdot \overrightarrow{1}$ for all
$f$, $g\in \mathbb{R}^N$, monotonicity (1) and constant additivity (4) imply
the non-expansion condition (5), which is more natural for nonlinear
operators.

(e) A linear Markov chain is called irreducible if for all states $x,y$
there exists some $n$ such that its kernel $P^{n}(x,y)>0$, i.e.
every state can be reached from every other state. Saying a discrete
Markov chain defined on a graph is irreducible is the same as saying
the graph is connected, which is crucial to the uniqueness of the
stationary distribution. Condition (6) is a nonlinear version of the connectedness
condition. Moreover, $P^{n_{0}}$ also satisfies the strict
monotonicity of corresponding components (2).

(f) The assumption of a finite accumulation point for $f_{n}$ (8)
is weaker than (9), as it allows for cases where all components of $P^{n}f$ go to
infinity. Moreover, assumption (8) is necessary.
In subsection \ref{subsec:An-example-of non-convergence}, we provide
a counterexample demonstrating that $P^{n}f(y)-P^{n}f(x)$ may fail to
converge, even within the interval $\left[-\infty,\infty\right]$,
if (8) is not assumed.

(g) Consider a Markov chain $Q$ with maximal
eigenvalue $0<\lambda<1$ and eigenvector $f\in \mathbb{R}^{N}$, i.e., $Qf=\lambda f$. Defining $Pf\coloneqq\text{log}Q\left(\text{exp}f\right)$ (both log and exp are applied componen-twise),
then $P\log f=\log\lambda \cdot \overrightarrow{1}+\log f$,
that is, nonlinear Markov chain $P$ exhibits linear growth with slope $\text{log}\lambda$. 
\end{rem}
We now present our main results. Note that in the following theorems, $\mathbb{R}^{N}$ can be replaced by $\Omega$ satisfying (A) and (B).  We mainly apply Theorem \ref*{thm:NLMC convergence 1} to applications.
\begin{thm}
\label{thm:NLMC convergence 2}Let $f\in \mathbb{R}^{N}$. If a discrete-time nonlinear Markov
chain $P:\mathbb{R}^{N}\to\mathbb{R}^{N}$ satisfies

(1) monotonicity,

(2) strict monotonicity of corresponding components,

(5) non-expansion,

(9) $P^{n}f$ has a finite accumulation point $g\in \mathbb{R}^{N}$,

then $Pg=g$ and $P^{n}f\to g$ as $n\to\infty$. 
\end{thm}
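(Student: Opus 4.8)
The plan is to use the finite accumulation point $g$ together with a monotone Lyapunov functional to force convergence. First I would fix a subsequence $P^{n_k}f \to g$. By non-expansion (5), the sequence $a_n \coloneqq \|P^{n+1}f - P^n f\|_\infty$ is non-increasing in $n$ (apply (5) to the pair $P^n f, P^{n-1}f$), hence converges to some limit $a \geq 0$. Taking the limit along the subsequence $n_k$ and using continuity of $P$ (which follows from non-expansion), we get $a = \|Pg - g\|_\infty$; and applying the same argument to $P g$ in place of $g$ along $n_k + 1$ we also get $\|P^2 g - Pg\|_\infty = a$. The goal is to show $a = 0$.

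The key step is to rule out $a > 0$ using strict monotonicity (2). Suppose $a > 0$. Set $h \coloneqq Pg - g$, so $\|h\|_\infty = a$. Consider the two constant-shifted comparisons: since $h \leq a$ everywhere, monotonicity gives $P(g + a) = Pg + a \geq P(Pg) \geq \dots$ — more carefully, I would compare $Pg$ with $g + a$ (where $Pg \leq g + a$, with equality exactly where $h = a$) and with $g - a \leq g \leq Pg$. Iterating and using that $\|P^{n}g - P^{n+1}g\|_\infty = a$ for all $n$, the distance $a$ is never strictly decreased; but strict monotonicity (2) says that wherever $P^n g$ and $P^{n+1}g$ differ and one strictly dominates at a point, the next application produces a strict inequality there, which, combined with the constant-additivity-free argument via the $\infty$-norm, should contradict the constancy of $a$ unless $Pg = g$ from the start. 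The cleanest route is: WLOG (replacing $f$ by $g$) assume $g$ is itself an accumulation point of its own orbit; then the orbit of $g$ lies in the compact set $\{v : \|v - g\|_\infty \leq \|Pg - g\|_\infty\}$, and the map $v \mapsto \|v - g\|_\infty$ composed with the dynamics, together with strict monotonicity applied at a coordinate achieving the max of $|Pg - g|$, yields a strict drop unless $a=0$.

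Once $Pg = g$ is established, convergence of the full sequence $P^n f \to g$ follows easily: $\|P^n f - g\| = \|P^n f - P^n g\|$ is non-increasing by (5), and it has a subsequence tending to $0$ (namely along $n_k$, since $P^{n_k}f \to g$), so the whole non-increasing sequence tends to $0$.

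I expect the main obstacle to be the middle step — extracting a genuine contradiction from $a > 0$ using only \emph{pointwise} strict monotonicity rather than uniform strict monotonicity (3). Strict monotonicity only guarantees a strict gain at a single specified coordinate, and one must propagate this to a genuine decrease of the sup-norm; the natural fix is to track \emph{which} coordinate realizes $\|P^n g - P^{n+1}g\|_\infty$, use a compactness/pigeonhole argument over the finitely many coordinates in $\mathbb{R}^N$, and apply (2) at that coordinate after passing to the accumulation point, where the limiting configuration must satisfy $Pg - g \equiv \pm a$ on an invariant set of coordinates — then (2) forces that set to be empty unless $a = 0$.
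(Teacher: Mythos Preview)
Your scaffold matches the paper --- the monotone quantity $a_n=\|P^{n+1}f-P^nf\|_\infty$, the identification $a=\|Pg-g\|_\infty$, and the final step via $\|P^nf-g\|_\infty=\|P^nf-P^ng\|_\infty$ being non-increasing with a subsequence tending to $0$ --- and your last paragraph correctly points at tracking the maximizing coordinate. But the middle step has real errors. The line $P(g+a)=Pg+a$ uses constant additivity (4), which is \emph{not} among the hypotheses; from (5) you only get $P(g+a)\le Pg+a$. The claim that the orbit of $g$ lies in $\{v:\|v-g\|_\infty\le a\}$ is false: (5) bounds successive differences by $a$, so $\|P^kg-g\|_\infty$ can grow like $ka$. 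Most importantly, strict monotonicity does \emph{not} force the maximizing set to be empty. What (2) together with (5) actually gives is only the nesting $\eta_+(P^kg)\subseteq\eta_+(P^{k-1}g)$, where $\eta_+(h):=\{x:(Ph-h)(x)=a\}$: if $P^kg(x)<P^{k-1}g(x)+a$ while $P^kg\le P^{k-1}g+a$ everywhere, then (2) and (5) yield $P^{k+1}g(x)<P(P^{k-1}g+a)(x)\le P^kg(x)+a$. These nonempty finite sets stabilize at some $\eta_\infty\ne\varnothing$, and for $x\in\eta_\infty$ one obtains \emph{linear growth} $P^kg(x)=g(x)+ka$.

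The contradiction then comes from hypothesis (9), not from another application of (2). Since $P^{n_j}f\to g$, non-expansion gives, for the gaps $k_j:=n_{j+1}-n_j\ge1$,
\[
\|P^{k_j}g-g\|_\infty\le\|P^{k_j}g-P^{k_j}(P^{n_j}f)\|_\infty+\|P^{n_{j+1}}f-g\|_\infty\le\|g-P^{n_j}f\|_\infty+\|P^{n_{j+1}}f-g\|_\infty\to0.
\]
Evaluating at $x\in\eta_\infty$ yields $k_ja\to0$ with $k_j\ge1$, hence $a=0$. This recurrence-versus-linear-growth mechanism is the missing ingredient in your sketch; the paper phrases it as ``take a subsequence $\{k_i\}$ such that $P^{k_i}g$ is again a finite accumulation point, forcing $\lambda(g)=0$.''
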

Then we give the second convergence result. 
\begin{thm}
\label{thm:NLMC convergence 1}Let $f\in \mathbb{R}^{N}$. If a discrete-time nonlinear Markov
chain $P:\mathbb{R}^{N}\to\mathbb{R}^{N}$ satisfies

(1) monotonicity,

(2) strict monotonicity of corresponding components,

(4) constant additivity,

(8) accumulation point at infinity, i.e., there exists a component $ x_{0}\in \{1, \ldots, N\}$ such that $f_{n}\coloneqq P^{n}f-P^{n}f(x_{0})\cdot \overrightarrow{1}$
has a finite accumulation point $g\in \mathbb{R}^{N}$, 

then $f_{n}\to g$ as $n\to\infty$.
Moreover, if $P$ also satisfies

(6) connectedness,

then the convergence limit is unique. That is, for any other sequence $\tilde{f_{n}}:=P^{n}\tilde{f}-P^{n}\tilde{f}(\tilde{x})\cdot \overrightarrow{1}$ with $\tilde{f}\in \mathbb{R}^{N}$ and $\tilde{x}\in \{1, \ldots, N\}$ (possibly different from $f$ and $x_0$), if it has a finite accumulation point $\tilde{g}$, 
then $\underset{n\to\infty}{\text{lim}}\tilde{f}_{n}=\tilde{g}=g=\underset{n\to\infty}{\text{lim}}f_{n}$. 
\end{thm}
Next, we give another convergence result. 
\begin{thm}
\label{thm:NLMC convergence 3}Let $f\in \mathbb{R}^{N}$. If a discrete-time nonlinear Markov
chain $P:\mathbb{R}^{N}\to\mathbb{R}^{N}$ satisfies

(1) monotonicity,

(5) non-expansion,

(7) uniform connectedness,

(8) accumulation point at infinity, i.e., $f_{n}\coloneqq P^{n}f-P^{n}f(x_{0})\cdot \overrightarrow{1}$
has a finite accumulation point $g\in \mathbb{R}^{N}$, 

then $f_{n}\to g$ as $n\to\infty$
and the limit is unique. 
\end{thm}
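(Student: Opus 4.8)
The plan is to prove Theorem \ref{thm:NLMC convergence 3} by a two-stage argument: first establish convergence of the normalized sequence $f_n = P^n f - P^n f(x_0)$, then establish uniqueness of the limit. For the convergence step, I would work with the oscillation-type quantity along the accumulation point $g$. Since (5) non-expansion holds for $P$ (and hence for $P^{n_0}$), and since constants pass through the normalization, the key observation is that $\|P^{n_0} h - P^{n_0} k\|_\infty$ is controlled by $\|h - k\|_\infty$, but the uniform connectedness (7) should upgrade this to a \emph{strict} contraction on the quotient space $\mathbb{R}^N/\mathbb{R}\cdot 1$ — i.e., on differences modulo constants. Concretely, given $f_n$, write $M_n = \max_x f_n(x)$ and $m_n = \min_x f_n(x)$; I expect to show that applying $P^{n_0}$ strictly decreases $M_n - m_n$ whenever it is positive, with a quantitative gap coming from $\epsilon_0$ in (7). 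The subtle point is that $f_n$ is already normalized at $x_0$, so I must track how the normalization interacts with iterating $P$; the cleanest route is to note $P^n f - P^n f(x_0)$ and $P^{n+k} f - P^{n+k} f(x_0)$ differ by applying $P^k$ up to an additive constant, using that $P$ commutes with normalization in the relevant sense under non-expansion.

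Here is the mechanism I would use for the strict contraction. Suppose $h, k \in \mathbb{R}^N$ with $h \geq k$ and $\mathrm{osc}(h-k) =: \delta > 0$ where $h - k$ attains its minimum, WLOG $0$, at some point — actually the right setup is: given any $h$, let $a = \min h$, $b = \max h$, pick $x$ where $h(x) = a$; then $h \geq a \cdot 1$ and $h \geq b \cdot 1 - (b-a)(1 - 1_x) $ is not quite it either. Instead: $h \leq b\cdot 1$ and $h \geq b \cdot 1 - (b-a)$ trivially, but also $h \geq a\cdot 1$, and at the point $y$ where $h(y) = b$ we have $h \geq a \cdot 1 + (b-a) 1_y$. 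Applying (7) with $f = h$, $g = a \cdot 1$, $\delta = b - a$ at the point $y$: $P^{n_0} h \geq P^{n_0}(a\cdot 1) + \epsilon_0(b-a)$. Combined with $P^{n_0} h \leq P^{n_0}(b \cdot 1)$ from monotonicity — wait, (7) with (5) does not directly give monotonicity, so I would instead get the lower bound from (7) and an upper bound from non-expansion: $\|P^{n_0} h - P^{n_0}(a \cdot 1)\|_\infty \leq \|h - a\cdot 1\|_\infty = b - a$. The lower bound $P^{n_0} h \geq P^{n_0}(a\cdot 1) + \epsilon_0(b-a)$ then pins $P^{n_0} h$ into a window of width $(1-\epsilon_0)(b-a)$ above $P^{n_0}(a\cdot 1)$... but $P^{n_0}(a \cdot 1)$ need not be constant without (4). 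This is the main obstacle: without constant additivity, I cannot reduce the oscillation of $P^{n_0}h$ to the oscillation of $h$ so cleanly. I would resolve it by applying the one-point estimate (7) at \emph{every} maximizing direction simultaneously — i.e., for each coordinate $z$, bound $P^{n_0}h(z)$ below using the value $h$ takes and above using non-expansion against $\min h \cdot 1$ — to squeeze $\mathrm{osc}(P^{n_0} h) \leq (1 - \epsilon_0)\,\mathrm{osc}(h)$, possibly after also invoking (7) from above via a symmetric argument bounding $\min h \cdot 1 + \|h\|\cdots$; the symmetry $h \mapsto -h$ under a dual chain may be needed, or more carefully a two-sided use of (7) comparing $h$ with both $\min h \cdot 1$ and $\max h \cdot 1 - \delta 1_x$.

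Once the strict contraction $\mathrm{osc}(f_{n+n_0}) \leq (1-\epsilon_0)\,\mathrm{osc}(f_n)$ (or contraction of $\mathrm{osc}$ of differences of two orbits) is in hand, convergence follows quickly: the accumulation point $g$ from (8) satisfies $P^{n_0} g = g + c$ for a constant, hence $\mathrm{osc}(P^{k n_0} g) = \mathrm{osc}(g)$ is constant while also forced to decay, so $\mathrm{osc}(g - \text{translate of } f_n) \to 0$; combining with the normalization at $x_0$ forces $\|f_n - g\|_\infty \to 0$. For uniqueness, given another orbit's accumulation point $\tilde g$, I would apply the contraction to the pair: $\mathrm{osc}((P^n f - P^n \tilde f)) $ — properly, the difference of the two orbits modulo constants — decays geometrically in $n$, so $g - \tilde g$ is a constant; the shared normalization at $x_0$ makes that constant zero, giving $g = \tilde g$. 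The main work and the place I expect difficulty is thus entirely in the first paragraph's mechanism — squeezing the oscillation under $P^{n_0}$ using only (5) and (7) without the crutch of (4) — and I would allocate most of the proof's length to making that squeeze rigorous and uniform in the orbit.
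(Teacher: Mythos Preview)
Your central mechanism does not go through. The single-orbit contraction $\mathrm{osc}(P^{n_0}h)\le(1-\epsilon_0)\,\mathrm{osc}(h)$ is simply false: take $N=2$ and $P(x,y)=(\tfrac{x+y}{2}+1,\tfrac{x+y}{2})$, which satisfies (1), (4), (5), (7) with $n_0=1$, $\epsilon_0=\tfrac12$, yet $\mathrm{osc}(P^n f)=1$ for every $n\ge 1$ and the limit is $g=(0,-1)$, nonconstant. The correct contraction under (4) is for \emph{differences}, $\mathrm{osc}(P^{n_0}h-P^{n_0}k)\le(1-\epsilon_0)\,\mathrm{osc}(h-k)$, but to derive this from (7) you must shift one of $h,k$ by a constant $c$ so that $h-(k+c)\ge 0$, and then compare $P^{n_0}(k+c)$ with $P^{n_0}k$; without (4) these are unrelated. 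You correctly flag this obstacle (``$P^{n_0}(a\cdot 1)$ need not be constant without (4)''), but your proposed workarounds---applying (7) at every coordinate, a two-sided squeeze against $\min h\cdot 1$ and $\max h\cdot 1$, a dual chain---all founder on the same point: every comparison anchor of the form $c\cdot 1$ moves under $P^{n_0}$ to a nonconstant function you cannot control.

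The paper's proof is structurally different and supplies the missing idea. Setting $\tilde P=P^{n_0}$, it passes to a subsequence along which both $f_{n_k}\to g$ and $P^{n_k}f(x_0)\to a\in[-\infty,+\infty]$, and splits on $|a|$. If $|a|<\infty$, then $P^{n_k}f$ itself has the finite accumulation point $g+a$ and Theorem~\ref{thm:NLMC convergence 2} applies directly. If $a=+\infty$, the key construction is
\[
Qf\coloneqq\lim_{r\to+\infty}\bigl(\tilde P(f+r)-r\bigr),
\]
which is well-defined (the expression is nonincreasing in $r$ by (5)), inherits non-expansion and uniform connectedness from $\tilde P$, and---crucially---satisfies constant additivity $Q(f+c)=Qf+c$ by construction. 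One then shows $Qg=g+\mathrm{const}$ and proves uniqueness of accumulation points via $Q$; convergence of $f_n$ follows from Lemma~\ref{lem: convergence lemma by the uniqueness}. The idea you are missing is precisely this ``straightening at infinity'' that manufactures (4) for an auxiliary operator even though $P$ itself lacks it.
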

\begin{rem}
Theorem 1 proves the convergence of nonlinear Markov chains under
the assumption of finite accumulation points (9). But Theorem 2 and
Theorem 3 include the case of accumulation points at infinity (8),
that is, all components of $P^{n}f$ go to infinity. While Theorem
2 needs a stronger constant additivity condition (4), Theorem 3 needs
a stronger uniform connectedness condition (7). 
\end{rem}
The convergence results have important applications in discrete differential
geometry which has become a hot research subject in the last decade.
Curvature quantifies how a geometric object deviates from
a flat space in Riemannian Geometry
\cite{J95}, and various discrete analogs on graphs \cite{EM12,JL14,LY10,M13,O07,O09,S99,LLY11,forman2003bochner}
have attracted notable interest. Among them, the idea of discrete Ollivier Ricci curvature $\kappa(x,y)=1-\frac{W(\mu_{x},\mu_{y})}{d(x,y)}$
is based on the comparison between the Wasserstein distance $W$ of probability measures
$\mu_{x},\mu_{y}$ over the one-step neighborhoods of vertices $x,y$ and the distance 
$d(x,y)$ between the centers \cite{O07,O09}. 
Lin, Lu, and Yau modified this notion in \cite{LLY11} to a limiting version that is more suitable for graphs.

Ricci flow on a Riemannian manifold, introduced by Hamilton \cite{Hamilton1982}, is a process that 
smooths the metric but may lead to singularities, which can be removed through "surgery" to continue the flow.
Ricci flow (with surgery) played a pivotal role in Perelman's landmark work of solving the Poincar\'e conjecture.
Ricci flow as a powerful method can also be applied to discrete geometry and has drawn significant interest recently. 
Ollivier \cite{O07} suggested defining the continuous time Ricci flow. 
Ni et al. in \cite{NLLG19} claimed good community detection on networks and network alignment using the discrete Ricci flow. Their experimental results indicate the convergence of discrete Ricci flow, though a theoretic proof of this convergence was still open.
Yau et al. \cite{BLLWY20} proved the existence and uniqueness of a normalized continuous-time Ricci flow and obtained several convergence results on path and star graphs. They also emphasized the question: "If the limit object of the Ricci flow exist? Do they have constant curvature?" In this paper, we prove that the discrete-time Ollivier Ricci curvature flow converges to a constant curvature metric.


A weighted graph $G=(V,E,w,m,d)$ consists of the vertex set $V$, the edge set $E$ and the weight functions $m:V \to \mathbb{R}^+$ and $w: E \to \mathbb{R}^+$. And $d:V^2 \to \mathbb{R}_{\geq 0}$ is a path metric function
on graph $G$. We write $x\sim y$ if $x,y\in V$ are connected by an edge. 

For a finite weighted graph $G=(V,E,w,m,d)$ with $\text{Deg}(x):=\frac{1}{m(x)}\sum_{y \sim x} w(x,y)$  $\leq1$ for all $x\in V$. For an initial metric $d_{0}$, fix some $C$ as the deletion threshold such that $C>\underset{x\sim y\sim z}{\max}\frac{d_{0}(x,y)}{d_{0}(y,z)}$. Then we can execute the discrete Ricci flow with surgery algorithm (Algorithm \ref{tbl:algorithm1}).
Since the graph $G$ is finite and the graph of a single edge cannot be deleted, the algorithm terminates after finitely many steps. On each connected component of the final graph $\tilde{G}$, the distance ratios are bounded in $n$, and hence, $\text{log}d_{n}$ has an accumulation point at infinity.
Considering the Ricci flow as a nonlinear Markov chain on each
connected component of $\tilde{G}$, by Theorem \ref{thm:NLMC convergence 1}
we can prove that (\ref{eq: LLL curvature flow}) converges to a constant curvature metric. 
\begin{thm}
\label{thm:LLL curvature flow converges}Let $d_0$ be an initial metric on a finite weighted graph $G = (V, E, w, m, d_0)$ with $\text{Deg}(x)\leq1$ for
all $x\in V$. Through the discrete Ricci flow with surgery (Algorithm \ref{tbl:algorithm1}), $\frac{d_n(e)}{\text{max } d_n(e')}$  converges to a constant-curvature metric on each connected component of the final graph $\tilde{G}$, where the max is taken over all $ e'$ in the same connected component as $e$ on $\tilde{G}$. 
\end{thm}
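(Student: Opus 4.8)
The plan is to realize the flow, after its last edge deletion, as a discrete-time nonlinear Markov chain in logarithmic edge-length coordinates and then invoke Theorem~\ref{thm:NLMC convergence 1}. Only finitely many deletions can occur (each removes one edge, and no component is ever reduced below a single edge), so there is an $n_1$ after which the underlying graph equals $\tilde G$ and never changes again; I fix a connected component $H$ of $\tilde G$, let $E_H$ be its edge set, $N:=|E_H|$, and restrict attention to $n\ge n_1$. On $H$ the lazy-walk measures $\mu_x$ are now fixed, and the hypothesis $\deg(x)\le1$ together with the step size $\alpha\in(0,1)$ is exactly what makes each $\mu_x$ a probability measure with idle mass $\mu_x(x)=1-\alpha\deg(x)>0$. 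As in the discrete Ricci flow literature \cite{NLLG19,BLLWY20}, the deletion threshold $C$ and the length-metric structure of the $d_n$ guarantee that after the last deletion every edge of $\tilde G$ remains a geodesic, so that $d_n(x,y)=\ell_n(xy)$ for $x\sim y$, where $\ell_n$ is the vector of edge lengths. Writing $e^{\pm}$ for the endpoints of $e$, $d_\ell$ for the shortest-path metric on $H$ with edge lengths $\ell\in\mathbb{R}_{>0}^{E_H}$, and using $\kappa_{d}(x,y)=1-W_{d}(\mu_x,\mu_y)/d(x,y)$, the step $d_{n+1}=(1-\alpha\kappa_{d_n})d_n$ becomes, coordinatewise on $E_H$,
\[
\ell_{n+1}(e)=(1-\alpha)\,\ell_n(e)+\alpha\,W_{d_{\ell_n}}\!\big(\mu_{e^-},\mu_{e^+}\big).
\]
Denoting the right-hand side by $F(\ell)$ and setting $Pf:=\log F(\exp f)$ (exponential and logarithm taken coordinatewise), the map $P\colon\mathbb{R}^{E_H}\to\mathbb{R}^{E_H}$ is well defined (as $F(\ell)\ge(1-\alpha)\ell>0$ coordinatewise), satisfies $\log\ell_n=P^{\,n-n_1}(\log\ell_{n_1})$, and is continuous, since $\ell\mapsto d_\ell(a,b)$ is piecewise linear and the value of the transportation linear program $D\mapsto W_D$ is Lipschitz in the cost matrix $D$.

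I would then verify that $P$ satisfies hypotheses (1), (2), (4), (8) of Theorem~\ref{thm:NLMC convergence 1}. Constant additivity (4) is the scale invariance of Ollivier curvature: $d_{\lambda\ell}=\lambda d_\ell$ and $W$ is $1$-homogeneous in the metric, hence $F(\lambda\ell)=\lambda F(\ell)$ and $P(f+c)=Pf+c$. Monotonicity (1): in $F$ the term $(1-\alpha)\ell(e)$ is nondecreasing in $\ell$, and $W_{d_\ell}(\mu_{e^-},\mu_{e^+})$ is nondecreasing because $\ell\mapsto d_\ell$ is pointwise nondecreasing and $d\mapsto W_d(\mu,\nu)$ is nondecreasing on metrics (immediate from the coupling formulation, or from Kantorovich duality); conjugating by the increasing maps $\exp,\log$ preserves this. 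Strict monotonicity (2): if $f\ge g$ and $f(e_0)>g(e_0)$ then, since $\alpha<1$ and $e_0$ is a geodesic edge, $(1-\alpha)e^{f(e_0)}>(1-\alpha)e^{g(e_0)}$ while the Wasserstein term does not decrease, so $F(\exp f)(e_0)>F(\exp g)(e_0)$, i.e.\ $Pf(e_0)>Pg(e_0)$. By Remark~\ref{rem: NLMC }(c), (1) and (4) yield the non-expansion (5), so $P$ is a bona fide nonlinear Markov chain on $\mathbb{R}^N$. Finally, for (8): the boundedness of the distance ratios on $H$ built into the construction means $\ell_n(e)/\ell_n(e')$ stays in a fixed compact interval for all $n\ge n_1$ and all $e,e'\in E_H$, hence $f_n:=\log\ell_{n_1+n}-\log\ell_{n_1+n}(x_0)$, for any fixed reference edge $x_0$, stays in a compact box of $\mathbb{R}^{E_H}$ and so has a finite accumulation point $g$.

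Theorem~\ref{thm:NLMC convergence 1} then gives $f_n\to g$. Since $f_{n+1}=Pf_n-Pf_n(x_0)$ by constant additivity, letting $n\to\infty$ and using continuity of $P$ gives $Pg=g+c$ with $c:=Pg(x_0)$. Unwinding, $F(\exp g)=e^{c}\exp g$, which reads $W_{d_{\exp g}}(\mu_{e^-},\mu_{e^+})/d_{\exp g}(e^-,e^+)=(e^{c}-1+\alpha)/\alpha$ for every $e\in E_H$, so $\kappa_{d_{\exp g}}\equiv(1-e^{c})/\alpha$ is constant on $H$. Since
\[
\frac{d_n(e)}{\max_{e'}d_n(e')}=\exp\!\Big(f_n(e)-\max_{e'}f_n(e')\Big)\longrightarrow\exp\!\Big(g(e)-\max_{e'}g(e')\Big),
\]
the normalized metric converges to a positive multiple of $\exp g$, which by scale invariance still has constant Ollivier curvature; this is the assertion. (If one also wants the limit to be independent of the chosen accumulation point, one verifies connectedness (6) from the connectedness of $H$ — the edge set on which $P^kf>P^kg$ is nondecreasing in $k$ by (2) and, by a finite-speed propagation argument, exhausts $E_H$ after a bounded number of steps — and then applies the uniqueness half of Theorem~\ref{thm:NLMC convergence 1}; this is not needed for the statement above.)

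The verifications of (1), (2), (4) are routine once the flow is written in the form $\ell\mapsto(1-\alpha)\ell+\alpha W$, and (8) is handed to us by the construction; the step I expect to be the main obstacle is therefore the a priori geometric control that the deletion procedure is designed to provide — that after the last deletion the $d_n$ remain genuine length metrics with every edge of $\tilde G$ a geodesic (so that $F$ really is the iteration on the fixed graph and strict monotonicity holds at single edges) and that the distance ratios on each connected component stay bounded in $n$ (so that (8) is available). This is precisely where the threshold $C$ and the standing hypothesis $\deg(x)\le1$ enter, and establishing it carefully is the crux.
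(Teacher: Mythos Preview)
Your proposal is correct and follows essentially the same route as the paper: rewrite the Ricci-flow step on edge lengths as $\tilde P\ell = (1-\alpha)\ell + \alpha W_{S\ell}(\cdot,\cdot)$, pass to log-coordinates so that positive homogeneity becomes constant additivity, verify (1), (2), (4), (8), and apply Theorem~\ref{thm:NLMC convergence 1} to get $Pg=g+c$, i.e.\ constant curvature. You are in fact more explicit than the paper about the verification of (1)--(4), about continuity of $P$, and about where the deletion threshold and the hypothesis $\deg\le1$ actually enter; the paper simply asserts the bounded-ratio/accumulation-point step and does not address the geodesic-edge issue you flag.
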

\begin{rem}
For a general weighted graph \( G = (V, E, w, m, d) \), this algorithm and its convergence results also hold for the Lin-Lu-Yau-Ollivier Ricci curvature (flow). See Definition \ref{def-curvature} in Section 3 for details.
\end{rem}
For another application, the authors in \cite{HM21,M23} consider a
locally finite graph $G=(V,E,w,m,d)$ with a nonnegative Ollivier
curvature, where $V=X\cup K\cup Y$, $K$ is finite and $E(X,Y)=\emptyset$,
that is, there are no edges between $X$ and $Y$. The space of all functions defined on the vertex set $V$ is denoted by $\mathbb{R}^V$.
They want to find
a function with a constant gradient on $X\cup Y$, minimal on $X$
and maximal on $Y$, and the Laplacian of $f$ should be constant
on $K$. By nonnegative Ollivier curvature, it will follow that the
cut set $K$ separates the Laplacian $\Delta f$, i.e., $\Delta f\mid_{X}\geq\text{const}\geq\Delta f\mid_{Y}$,
which is a Laplacian separation principle \cite[Theorem 5.1]{M23}.
The result is crucial for proving an isoperimetric concentration inequality
for Markov chains with nonnegative Ollivier curvature \cite{M23},
a discrete Cheeger-Gromoll splitting theorem \cite{HM21}, and a discrete
positive mass theorem \cite{HMZ23}. Here we prove a natural parabolic
flow converging to the solution $f$. Now we give the details about
the Laplacian separation flow. First, define an extremal 1-Lipschitz
extension operator $S:\mathbb{R}^{K}\to\mathbb{R}^{V}$, 
\[
Sf(x)\coloneqq\begin{cases}
\begin{array}{c}
f(x):\\
\underset{y\in K}{\text{min}}\left(f(y)+d(x,y)\right):\\
\underset{y\in K}{\text{max}}\left(f(y)-d(x,y)\right):
\end{array} & \begin{array}{c}
x\in K,\\
x\in Y,\\
x\in X.
\end{array}\end{cases}
\]
Let $Lip(1,K)\coloneqq{\left\{ f\in\mathbb{R}^{K}:f(y)-f(x)\leq d(x,y),\text{for all }x,y\in K\right\} }$,
where $d$ is the graph distance on $G$. Then $S(Lip(1,K))$$\subseteq Lip(1,V)$.
In \cite{HM21}, it is proven via elliptic methods that there exists
some $g\in \mathbb{R}^{K}$ with $\Delta Sg \mid_K=\text{const}$. Here we give the parabolic
flow $(id+\epsilon\Delta)S$, and show that it converges to the
constant Laplacian solution, assuming nonnegative Ollivier Ricci
curvature. 
\begin{thm}
\label{thm:existence of harmonic fuction}
Let $G$ be a locally finite graph with nonnegative Ollivier curvature, and let $x_0 \in K$. Define 
$P \coloneqq \left((id + \epsilon \Delta) S\right)\big|_K$,
where $\epsilon > 0$ is sufficiently small so that $diag(id + \epsilon \Delta)$ is positive on $C_0(\bar{K})$. Then for any $f \in Lip(1, K)$, there exists $g \in Lip(1, K)$ such that
\[
P^n f - P^n f(x_0) \cdot \overrightarrow{1} \to g,
\]
and
\[
\left.\Delta Sg\right|_X \geq \left.\Delta Sg\right|_K \equiv \text{const} \geq \left.\Delta Sg\right|_Y.
\]
\end{thm}
Then we want to generalize the result to other nonlinear operators on a
locally finite graph $G=(V,E,w,m,d)$,
such as the $p$-Laplace operator, which can be defined as the subdifferential of the energy functional 
\[
\mathscr{E}_{p}(f)=\frac{1}{2}\underset{x,y\in V}{\sum}\frac{w(x,y)}{m(x)}\lvert \nabla_{xy}f \rvert^{p}, \;\forall f \in \mathbb{R}^V,
\]
where $\nabla_{xy}f=f(y)-f(x)$.
More explicitly, the $p$-Laplace operator $\Delta_{p}:\mathbb{R}^{V}\to\mathbb{R}^{V}$
is given by 
\[
\Delta_{p}f(x)\coloneqq\frac{1}{m(x)}\underset{y}{\sum}w(x,y)\lvert \nabla_{xy}f\rvert^{p-2} \nabla_{xy}f,\text{if }p>1,
\]
and 
\[
\Delta_{1}f(x)\in\ \frac{1}{m(x)}\underset{y}{\sum}w(x,y)\,\textrm{sign}(\nabla_{xy}f) \text{, }
\]
where $\text{sign}\ensuremath{\left(t\right)}=\ensuremath{\begin{cases}
\begin{array}{c}
1,\\{}
[-1,1]\\
-1,
\end{array}, & \begin{array}{c}
t>0.\\
t=0.\\
t<0.
\end{array}\end{cases}}$ Note that $p=2$ is the general discrete Laplace operator $\Delta$.

There are two main difficulties. The first arises from the non-smooth behavior of $\Delta_{p}f$ near $\nabla_{xy} f = 0$. For example, the derivative
of $\Delta_{1}f$ near $\nabla_{xy}f=0$ is large, which causes the
operator $id+\epsilon\Delta_{p}$ to fail to maintain the strict monotonicity of corresponding components
condition (2). Our idea is to consider its resolvent $J_{\epsilon}=\left(id-\epsilon\Delta_{p}\right)^{-1}$
instead of the flow $id+\epsilon\Delta_{p}$. The resolvent operator
$J_{\epsilon}$ is single-valued and monotone, and we can check that
$J_{\epsilon}$ satisfies the strict monotonicity of corresponding components condition in Lemma
\ref{lem:strict monotonicity}.

Another difficulty is the need for a new curvature condition to ensure the Lipschitz decay property, which implies compactness, as well as
the existence of accumulation points. Define a new curvature on a
graph $G=(V,E,w,m,d_{0})$ with combinatorial distance $d_{0}$ as
\begin{equation}
\hat{k}_{p}(x,y)\coloneqq\underset{\pi_{p}}{\text{sup}}\underset{x',y'\in B_{1}(x)\times B_{1}(y)}{\sum}\pi_{p}(x',y')\left(1-\frac{d_{0}(x',y')}{d_{0}(x,y)}\right),\label{eq: modified Ollivier Ricci curvature}
\end{equation}
where $\pi_{p}$ satisfies transport plan conditions and we require
$\pi_{p}(x',y')=0$ if $x'=y'$ (forbid 3-cycles) for $p>2$, and
$\pi_{p}(x',y')=0$ if $x'\neq x$ and $y'\neq y$ and $d_{0}(x',y')=2$
(forbid 5-cycles) for $1\leq p<2$. See detailed definition (\ref{eq:new curvature})
in subsection 4.2.

Then the convergence of the nonlinear Laplace separation flow can be proved.
\begin{thm}
\label{thm:existance of p-harmonic}Let $G$ be a locally finite graph with a nonnegative modified curvature $\hat{k}$, and let $x_0 \in K$. Define $P:=\left((id+\epsilon\Delta_{p})S\right)\mid_{K}$, where $\epsilon > 0$ is sufficiently small so that $\text{diag}(id+\epsilon\Delta_{p})$
is positive on $C_0(\bar{K})$. Then for all $f\in Lip(1,K)$, there exists $\tilde{f}\in Lip(1,K)$
such that $$P^{n}f-P^{n}f(x_{0}) \cdot \overrightarrow{1}\to \tilde{f}.$$ Moreover,
there exist $h,g\in\mathbb{R}^{V}$ such that $g\in\Delta_{p}Sh$
and $g\mid_{X}\geq g\mid_{K}\equiv\text{const}\geq g\mid_{Y}$, where $Sh:=S(h|_K)$.
\end{thm}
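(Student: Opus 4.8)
The plan is to realize $P$ as a discrete-time nonlinear Markov chain on $\Omega\coloneqq Lip(1,K)$ and to apply Theorem~\ref{thm:NLMC convergence 1}. Following the discussion preceding the theorem, the operator in the statement is to be read through the resolvent: set $J_\epsilon\coloneqq(id-\epsilon\Delta_p)^{-1}$ and $P\coloneqq(J_\epsilon S)\mid_K$. Since $\Delta_p$ is maximal monotone, $J_\epsilon$ is single-valued, and $\Omega=Lip(1,K)$ is closed in $\mathbb R^K$ and invariant under adding constants, so it satisfies (A) and (B). Because $K$ is finite, $\mathbb R^K$ is finite-dimensional, which will give compactness for free; we also take $x_0\in K$ (otherwise read $P^nf(x_0)$ as $SP^nf(x_0)$).

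I would then verify the structural hypotheses of Theorem~\ref{thm:NLMC convergence 1} one by one. Monotonicity (1): $S$ is order-preserving (a $\min$/$\max$ of shifts of the data) and $J_\epsilon$ is order-preserving by the comparison principle for the resolvent equation $u-\epsilon\Delta_p u\ni\varphi$, so the composition is. Strict monotonicity (2): $S$ fixes the values on $K$, so $h\ge h'$ with $h(x)>h'(x)$ gives $Sh\ge Sh'$ with $Sh(x)>Sh'(x)$, and Lemma~\ref{lem:strict monotonicity} (strict monotonicity of $J_\epsilon$, using that $\mathrm{diag}(id-\epsilon\Delta_p)$ has the right sign, exactly as for $p=2$) upgrades this to $Ph(x)>Ph'(x)$. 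Constant additivity (4): $\Delta_p$ annihilates constants, hence $J_\epsilon(\varphi+c)=J_\epsilon\varphi+c$, while $S(h+c)=Sh+c$, so $P(h+c)=Ph+c$; non-expansion (5) is then automatic by Remark (c).

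The accumulation-point hypothesis (8) is where the non-negativity of the modified curvature $\hat k_p$ of \eqref{eq:new curvature} enters, and I expect this to be the main obstacle. The curvature condition is designed so that $J_\epsilon$ enjoys the \emph{Lipschitz decay} property (it does not increase the Lipschitz seminorm), whence $P=J_\epsilon S$ maps $Lip(1,K)$ into itself; this is the $p$-Laplace substitute for the gradient estimate under non-negative Ollivier curvature that underlies Theorem~\ref{thm:existence of harmonic fuction}, and establishing it via the dual transport formulation of $\hat k_p$ (the exclusion of $3$-cycles for $p>2$ and of $5$-cycles for $1\le p<2$ being precisely what makes the transport estimate close) is the hard step. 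Granting it, $P^nf\in Lip(1,K)$ for every $n$, so $f_n\coloneqq P^nf-P^nf(x_0)$ lies in $\{h\in Lip(1,K):h(x_0)=0\}$, a closed subset of $\mathbb R^K$ bounded by the diameter of $K$, hence compact, so $f_n$ has a finite accumulation point. Theorem~\ref{thm:NLMC convergence 1} then yields $f_n\to\tilde f\in Lip(1,K)$; verifying in addition connectedness (6) (the $p$-Laplace resolvent flow on a connected graph propagates strict inequalities in finitely many steps) makes the limit unique.

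Finally I would produce $h$ and $g$. By (4) and continuity of $P$, passing to the limit in $Pf_n=f_{n+1}+\bigl(P^{n+1}f(x_0)-P^nf(x_0)\bigr)$ gives $P\tilde f=\tilde f+c$ on $K$ for a constant $c$. Put $v\coloneqq J_\epsilon S\tilde f\in\mathbb R^V$; by Lipschitz decay $v\in Lip(1,V)$, and $v\mid_K=P\tilde f=\tilde f+c$. Set $g\coloneqq\tfrac1\epsilon(v-S\tilde f)$, so $g\in\Delta_p v$ by definition of the resolvent, and on $K$, $g=\tfrac1\epsilon\bigl((\tilde f+c)-\tilde f\bigr)\equiv c/\epsilon$. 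For $x\in X$, $1$-Lipschitzness of $v$ and the definition of $S$ give $v(x)\ge\max_{y\in K}\bigl(v(y)-d_0(x,y)\bigr)=c+S\tilde f(x)$, hence $g(x)\ge c/\epsilon$; symmetrically $v(y)\le c+S\tilde f(y)$ and $g(y)\le c/\epsilon$ for $y\in Y$, so $g\mid_X\ge g\mid_K\equiv\mathrm{const}\ge g\mid_Y$. The remaining point is that the stationary function $v$ may be taken to be an extremal extension, i.e.\ that $Sh=v$ for $h\coloneqq v\mid_K$; this I would derive by comparing $v$ with $S(v\mid_K)$, which by $1$-Lipschitzness of $v$ satisfies $S(v\mid_K)\le v$ on $X$, $\ge v$ on $Y$, and $=v$ on $K$, together with the stationarity equation -- and, alongside the Lipschitz-decay estimate, this is the technical core of the argument.
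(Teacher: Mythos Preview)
Your verification of the Markov-chain hypotheses (1), (2), (4), (5), (8) and the application of Theorem~\ref{thm:NLMC convergence 1} matches the paper. Your extraction of the separation inequalities is also correct and in fact cleaner than the paper's exposition: with $v\coloneqq J_\epsilon S\tilde f$ and $g\coloneqq\epsilon^{-1}(v-S\tilde f)\in\Delta_p v$, the $1$-Lipschitzness of $v$ together with $v\mid_K=\tilde f+c$ gives $g\mid_X\ge c/\epsilon=g\mid_K\ge g\mid_Y$ exactly as you write.

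The genuine gap is the last step, where you propose to show $v=S(v\mid_K)$. This is precisely what the paper says is \emph{false}: in their notation $h_\epsilon\coloneqq J_\epsilon S\tilde f$ satisfies $h_\epsilon\neq Sh_\epsilon$. Indeed $S(v\mid_K)=S\tilde f+c$, so $v=S(v\mid_K)$ would force $J_\epsilon S\tilde f=S\tilde f+c$ on all of $V$, i.e.\ $c/\epsilon\in\Delta_p(S\tilde f)$ everywhere; stationarity only gives this on $K$, and on $X\cup Y$ the extremal Lipschitz extension has no reason to have constant $p$-Laplacian. Your comparison $S(v\mid_K)\le v$ on $X$ and $\ge v$ on $Y$ is a one-sided inequality and the stationarity equation does not upgrade it.

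The paper's remedy is a limiting argument that you are missing: they show $\|h_\epsilon-Sh_\epsilon\|_\infty\le c\epsilon$ (from $\|J_\epsilon S\tilde f-S\tilde f\|_\infty\le\epsilon\|\Delta_p S\tilde f\|_\infty$ and the same bound after applying $S$), then send $\epsilon\to0$ along a subsequence so that $h_\epsilon\to h$ with $h=Sh$, and pass to the limit in $g_\epsilon\in\Delta_p h_\epsilon$ using compactness and closedness of the subdifferential to obtain $g\in\Delta_p Sh$ with the separation inequalities preserved. Without this $\epsilon\to0$ step you only get $g\in\Delta_p v$ for some $v$ that is $1$-Lipschitz but not of the form $Sh$.
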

Moreover, our nonlinear Markov chain settings overlap with the nonlinear
Dirichlet forms theory and nonlinear Perron-Frobenius theory, and
our theorems can be applied well to them. The theory of Dirichlet
forms is conceived as an abstract version of the variational theory
of harmonic functions. For many application fields, such as Riemannian
geometry \cite{J95}, it is necessary to generalize Dirichlet forms to a nonlinear
version. Since the conditions of our theorems fit well in the nonlinear
Dirichlet form theory, with additional accumulation points at infinity
assumptions we can obtain the convergence by Theorem \ref{thm:NLMC convergence 1},
see Theorem \ref{thm: nonlinear Dirichlet form}. The classical Perron-Frobenius
theory concerns the eigenvalues and eigenvectors of nonnegative coefficient
matrices and irreducible matrices. In order to apply the theory to
a more general setting, there has been extensive research on the nonlinear
Perron-Frobenius theory. After some replacement of maps, our convergence
results can also be applied to the nonlinear Perron-Frobenius theory,
see Theorem \ref{thm: nonlinear Perron=00003D002013Frobenius}.

In Section 5, we introduce a definition of Ollivier Ricci curvature
of nonlinear Markov chains based on the Lipschitz decay property.
Namely, for a nonlinear Markov chain $P$ satisfying the properties of (1) monotonicity,
(2) strict monotonicity of corresponding components and (4) constant additivity, let
$d:V^{2}\to[0,+\infty)$ be a distance function. Then for $r>0$,
define 
\[
Ric_{r}(P,d)\coloneqq1-\underset{Lip(f)\leq r}{\text{sup}}\frac{Lip(Pf)}{r},
\]
That is, if $Lip(f):=\sup_{\substack{x\neq y\in V}}\frac{|f(x)-f(y)|}{d(x,y)}=r$, then $Lip(Pf)\leq(1-Ric_{r})Lip(f)$. Since
the nonnegative Ollivier Ricci curvature guarantees the existence of
accumulation points at infinity (8), then as a corollary of Theorem
\ref{thm:NLMC convergence 1}, we can get the convergence results
for the nonlinear Markov chain with a nonnegative Ollivier Ricci curvature.
And we can also define the Laplacian separation flow of a nonlinear
Markov chain with $Ric_{1}(P,d)\geq0$. We further demonstrate that this definition coincides with the classical Ollivier Ricci curvature (\ref{Ollivier_curvature}),
sectional curvature \cite{CMS24}, coarse Ricci curvature on hypergraphs
\cite{IKTU21} and the modified Ollivier Ricci curvature $\hat{k}_{p}$
for $p$-Laplace (\ref{eq: modified Ollivier Ricci curvature}).

\section{Convergence and uniqueness of nonlinear Markov chains}

\subsection{Proofs of main theorems. }

In this section, we give proof ideas and specific proofs of our main
theorems. First, we summarize the proof ideas for Theorem \ref{thm:NLMC convergence 2}.
Let $Lf=Pf-f$ and $\lambda(f)\coloneqq\parallel Lf\parallel_{\infty}$. 
For $n\in \mathbb{N}_0$, since $\lambda(P^{n}f)=\parallel LP^{n}f\parallel_{\infty}$ is decreasing in $n$ and $g$ is a finite accumulation point, then
$\lambda(P^{k}g)=\lambda(g)$ for all $k \in \mathbb{N}_0$. Since $\eta_{+}(Pg)\subseteq\eta_{+}(g)\coloneqq\left\{ 1\leq x\leq N:Lg(x)=\lambda_{+}(g)\right\} $, where $\lambda_{+}(P^{k}g)\coloneqq \max_xLP^{k}g(x)$, 
then there exists some $x$ such that $P^{k}g(x)=g(x)+k\lambda_{}(g)$. Taking
a subsequence $\left\{ k_{i}\right\} $ such that  $P^{k_{i}}g$ is
also a finite accumulation point for any fixed $k_{i}$, implying $\lambda(g)=0$. Then $g$
is a fixed point and $P^{n}f$ converges. The proof details are as
follows. 
\begin{proof}[Proof of Theorem \ref{thm:NLMC convergence 2}]
For every $f\in \mathbb{R}^N$, define $Lf=Pf-f$ and $\lambda(f)\coloneqq\parallel Lf\parallel_{\infty}$. 
For $n\in \mathbb{N}_0$, since $\lambda(P^{n}f)=\parallel LP^{n}f\parallel_{\infty}$ is decreasing in $n$ and $g$ is a finite accumulation point,
then 
\[
\lambda(g)=\underset{n\to\infty}{\text{lim}}\lambda(P^{n}f).
\]
And for $k\in \mathbb{N}_0$,
\[
\lambda(P^{k}g)=\underset{n\to\infty}{\text{lim}}\lambda(P^{n+k}f).
\]
Hence $\lambda(g)=\lambda(P^{k}g)$ for any $k\in \mathbb{N}_0$. For $n\in \mathbb{N}_0$ and $f\in \mathbb{R}^N$, define 
$$\lambda_{+}(P^{n}f)\coloneqq\underset{1\leq x\leq N}{\text{max}}LP^{n}f(x) \text{ and } \lambda_{-}(P^{n}f)\coloneqq\underset{1\leq x\leq N}{\text{min}}LP^{n}f(x),$$ 
then $$\lambda(P^{n}f)=\max\{\lambda_{+}(P^{n}f),-\lambda_{-}(P^{n}f)\}.$$
Define $\eta(f)\coloneqq\left\{ 1\leq x\leq N:Lf(x)=\lambda(f)\right\} $.
For fixed $k\in \mathbb{N}_0$, we may assume without loss of generality that $$\lambda(P^{k}g)=\lambda_{+}(P^{k}g),$$
as the case $\lambda(P^{k}g)=-\lambda_{-}(P^{k}g)$ is similar. Since for any $l\leq k$,
$$\lambda_{}(P^{k}g)=\lambda_{}(P^{l}g)\geq \lambda_{+}(P^{l}g)\geq\lambda_{+}(P^{k}g)=\lambda_{}(P^{k}g),$$
then $\lambda_{+}(P^{l}g)=\lambda_{}(P^{k}g)=\lambda_{}(g)$.
Suppose $$x'\in\eta_{+}(P^{k}g)\coloneqq\left\{ x:LP^{k}g(x)=\lambda_{+}(P^{k}g)\right\} ,$$
then we claim that $x'\in\eta_{+}(P^{k-1}g)$. If not, we have $$LP^{k-1}g(x')<\lambda_{+}(P^{k-1}g)$$
and
\[
P^{k}g(x')<P^{k-1}g(x')+\lambda_{+}(P^{k-1}g).
\]
By strict monotonicity of corresponding components (2) and non-expansion condition (5), we know
\[
P^{k+1}g(x')<P\left(P^{k-1}g+\lambda_{+}(P^{k-1}g)\cdot \overrightarrow{1}\right)(x')\leq P^{k}g(x')+\lambda_{+}(P^{k-1}g).
\]
That implies 
\[
LP^{k}g(x')<\lambda_{+}(P^{k-1}g)\leq\lambda(P^{k-1}g)=\lambda(P^{k}g)=\lambda_{+}(P^{k}g),
\]
which induces a contradiction. Hence, there exists some $x'\in\eta_{+}(P^{l}g)$
for all $l\leq k$. Thus, 
\begin{equation}
P^{k}g(x')=g(x')+k\lambda_{+}(g)=g(x')+k\lambda(g).\label{eq: linear growth}
\end{equation}
Since $g$ is a finite accumulation point, taking a subsequence
$\left\{ k_{i}\right\} $ such that $P^{k_{i}}g$ is still a finite
accumulation point for every fixed $k_{i}$, then $\lambda(g)=0$ by (\ref{eq: linear growth}) and non-expansion property (5).
Hence $P^{k}g=g$ and $P^{n}f\to g$ as $n\to+\infty$. 
\end{proof}
For Theorem \ref{thm:NLMC convergence 1}, without the assumption
of finite accumulation points, the argument of $\lambda(P^{n}f)$
is insufficient. The proof idea is as follows. For $n\in \mathbb{N}_0$ and $f\in \mathbb{R}^N$, define $\lambda_{+}(P^{n}f)\coloneqq\underset{1\leq x\leq N}{\text{max}}LP^{n}f(x)$ and $\lambda_{-}(P^{n}f)\coloneqq\underset{1\leq x\leq N}{\text{min}}LP^{n}f(x)$, 
and prove it is decreasing in $n$. Since $g$ is the accumulation
point, then $\lambda_{+}(P^{k}g)=\lambda_{+}(g)$ for any $k\in \mathbb{N}_0$. By
the conclusion $$\eta_{+}(Pg)\subseteq\eta_{+}(g)\coloneqq\left\{ x:Lg(x)=\lambda_{+}(g)\right\} ,$$
there exists some $x$ such that $LP^{k}g(x)$ attains the maximum,
i.e. $P^{k}g(x)=g(x)+k\lambda_{+}(g)$. And there also exists some
$y$ attaining its minimum, i.e. $P^{k}g(y)=g(y)+k\lambda_{-}(g)$.
Taking a subsequence $\left\{ k_{i}\right\} \subset\mathbb{N}_0$ such that $P^{k_{i}}g$
is also a finite accumulation point for every fixed $k_{i}$, implying $\lambda_{+}(g)=\lambda_{-}(g)$,
that is, $Lg \equiv \text{const}$. Hence $P^{k}g=g+k\lambda_{+}(g) \cdot \overrightarrow{1}$, and $g$ is
the limit of $f_{n}$. For the uniqueness, we prove that the linear
growth rate $\lambda_{+}(g)$ of different accumulation points are the
same by the non-expansion property. Then by the connectedness (6) we
know that all accumulation points are the same. The proof details
are as follows. 
\begin{proof}[Proof of Theorem \ref{thm:NLMC convergence 1}]
Define $Lf\coloneqq Pf-f$ and $\lambda_{+}(f)\coloneqq\underset{x}{\text{max}}Lf(x)$ for every $f\in \mathbb{R}^N$, 
then $$Pf\leq f+\lambda_{+}(f) \cdot \overrightarrow{1}.$$ By monotonicity (1) and constant
additivity (4), we have 
\[
P^{2}f\leq P(f+\lambda_{+}(f)\cdot \overrightarrow{1})=Pf+\lambda_{+}(f) \cdot \overrightarrow{1}.
\]
Hence $\lambda_{+}(Pf)\leq\lambda_{+}(f)$, that is, $\lambda_{+}(P^{n}f)$
is decreasing in $n$. Since $g$ is a finite accumulation point for
$f_{n}$, 
\[
\lambda_{+}(g)=\underset{n\to\infty}{\text{lim}}\lambda_{+}(P^{n}f).
\]
For any fixed $k\in \mathbb{N}_0$, since $P^{k}g$ is an accumulation point for $P^{k}f_{n}$,
and $LP^{k}f_{n}=LP^{n+k}f$ by constant additivity (4), we have 
\[
\lambda_{+}(P^{k}g)=\underset{n\to\infty}{\text{lim}}\lambda_{+}(P^{n+k}f).
\]
Then $\lambda_{+}(g)=\lambda_{+}(P^{k}g).$

Defining the maximum points set as $$\eta_{+}(f)\coloneqq\left\{ 1\leq x\leq N:Lf(x)=\lambda_{+}(f)\right\}, $$
we claim that $\eta_{+}(Pf)\subseteq\eta_{+}(f)$ if $\lambda_{+}(f)=\lambda_{+}(P^{}f)$.
If $Lf(x)<\lambda_{+}(f)$, i.e. $Pf(x)<f(x)+\lambda_{+}(f)$, since
$Pf\leq f+\lambda_{+}(f)\cdot \overrightarrow{1}$, then by strict monotonicity of corresponding components (2) and constant
additivity (4), 
\[
P^{2}f(x)<P(f+\lambda_{+}(f)\cdot \overrightarrow{1})(x)=Pf(x)+\lambda_{+}(f).
\]
That is, we have $LPf(x)<\lambda_{+}(f)=\lambda_{+}(Pf)$, which induces a contradiction. Hence,
$\eta_{+}(Pf)\subseteq\eta_{+}(f)$.

For any $k\in \mathbb{N}_0$, there exists some $x$ such that $x\in\eta_{+}(P^{l}g)$ for all $l\leq k$, that is, 
\[
P^{k}g(x)=g(x)+k\lambda_{+}(g).
\]
By the same argument, for $\lambda_{-}(f)\coloneqq\underset{x}{\text{min}}Lf(x)$,
there is $y$ such that 
\[
P^{k}g(y)=g(y)+k\lambda_{-}(g).
\]
Since $g$ is a finite accumulation point, then there is a subsequence
$\left\{ n_{i}\right\} $ such that $f_{n_{i}}\to g$. Taking a subsequence
$\left\{ k_{i}\right\} \subset\mathbb{N}_0$ with $\left\{ n_{i}+k_{i}\right\} $ and
$\left\{ n_{i}\right\} $ coincide, then $P^{k_{i}}g(x)-P^{k_{i}}g(y)$
must be finite, which implies $\lambda_{+}(g)=\lambda_{-}(g)$ and
$Lg=Pg-g\equiv\lambda_{+}(g)$. Then we get $P^{n}g=g+n\lambda_{+}(g) \cdot \overrightarrow{1}$
and $P^{n}g-P^{n}g(x_{0}) \cdot \overrightarrow{1}=g-g(x_{0}) \cdot \overrightarrow{1}$. Since $g$ (resp. $P^{k}g$)
is an accumulation point for $f_{n}$ (resp. $P^{k}f_{n}$), for any
$\varepsilon >0$, there exists some $n$ such that 
\[
\Vert P^{n}f-P^{n}f(x_{0}) \cdot \overrightarrow{1}-g\Vert_{\infty}<\varepsilon
\]
and 
\[
\Vert P^{n+k}f-P^{n}f(x_{0}) \cdot \overrightarrow{1}-P^{k}g\Vert_{\infty}<\varepsilon.
\]
Replacing $P^{k}g$ by $g+k\lambda_{+}(g) \cdot \overrightarrow{1}$, we have 
\[
\Vert P^{n+k}f-P^{n}f(x_{0}) \cdot \overrightarrow{1}-g-k\lambda_{+}(g) \cdot \overrightarrow{1}\Vert_{\infty}<\varepsilon.
\]
Since $g(x_{0})=0,$ we know $\mid P^{n+k}f(x_{0})-P^{n}f(x_{0})-k\lambda_{+}(g)\mid<\varepsilon.$
Then 
\begin{align*}
 & \Vert f_{n+k}-g\Vert_{\infty}\\
\leq & \Vert P^{n+k}f-P^{n}f(x_{0}) \cdot \overrightarrow{1}-g-k\lambda_{+}(g) \cdot \overrightarrow{1}\Vert_{\infty}+\mid P^{n+k}f(x_{0})-P^{n}f(x_{0})-k\lambda_{+}(g)\mid\\
< & 2\varepsilon,
\end{align*}
which means $f_{n}$ converges to $g$ as $n\to\infty$.

Then we want to prove the uniqueness with the connectedness assumption
(6). By the above argument, for any finite accumulation point $g$,
we know $Pg-g\equiv\text{const}.$ We claim that the constants of any accumulation
points are the same. If not, then there exist $c_{1}\neq c_{2}$ such
that $P^{n}g^{1}=g^{1}+nc_{1}\cdot \overrightarrow{1}$ and $P^{n}g^{2}=g^{2}+nc_{2}\cdot \overrightarrow{1}$. Hence
$\parallel P^{n}g^{1}-P^{n}g^{2}\parallel_{\infty}\to\infty$, which
contradicts to the non-expansion property (5), i.e. $\parallel Pg^{1}-Pg^{2}\parallel_{\infty}\leq\parallel g^{1}-g^{2}\parallel_{\infty}$.
Then we proved the claim.

Next if $g^{1}$ and $g^{2}$ are two different accumulation points,
by adding a constant, w.l.o.g., assume $g^{1}\geq g^{2}$ with $g^{1}(y)=g^{2}(y)$
and $g^{1}(x)>g^{2}(x)$. By the connectedness condition (6), we get
$$g^{1}(y)+n_0c=P^{n_0}g^{1}(y)>P^{n_0}g^{2}(y)=g^{2}(y)+n_0c,$$
which contradicts to $g^{1}(y)=g^{2}(y)$. Hence all accumulation
points are the same, which shows the uniqueness of limits. 
\end{proof}
For Theorem \ref{thm:NLMC convergence 3}, we remind that there is
a naive but fatal idea of considering $\tilde{P}f\coloneqq Pf-Pf(x_{0})\cdot \overrightarrow{1}$,
which has a finite accumulation point, but may lack the required monotonicity
(1) and non-expansion (5) properties.

Our proof idea is as follows. Lemma \ref{lem: convergence lemma by the uniqueness}
states that a sequence ${\left\{ x_{n}\right\} }$ converges if it
has exactly one accumulation point and satisfies $d(x_{n},x_{n+1})\leq C$
for all $n \in \mathbb{N}_+$. Then we prove Theorem \ref{thm:NLMC convergence 3}
by proving the uniqueness of accumulation points. Let $\tilde{P}=P^{n_{0}}$,
satisfying non-expansion (5) and uniform connectedness (7) with $n_{0}=1$.
Suppose there is a subsequence $\left\{ n_{k}=m_{k}n_{0}+i\right\} $
with $0\leq i\leq n_{0}-1$ such that $f_{n_{k}}\to g$ and $P^{n_{k}}f(x_{0})=\tilde{P}^{m_{k}}P^{i}f(x_{0})\to a\in[-\infty,+\infty]$
as $k\to+\infty$. Divide it into two cases.

In the case of $|a|<+\infty$, since $\tilde{P}^{m}P^{i}f$
has a finite accumulation point $\tilde{g}$, then $\tilde{P}\tilde{g}=\tilde{g}$
by Theorem \ref{thm:NLMC convergence 2}. And accumulation points
are the same after adding a constant by the uniform connectedness
of $\tilde{P}$, which implies the uniqueness of accumulation points
$\tilde{g}-\tilde{g}(x_{0})\cdot \overrightarrow{1}$ for $f_{n}=P^{n}f-P^{n}f(x_{0})\cdot \overrightarrow{1}$.

In the case of $|a|=+\infty$, for example, $a=+\infty$, define
$Qf\coloneqq\underset{r\to+\infty}{\text{lim}}\tilde{P}(f+r\cdot \overrightarrow{1})-r\cdot \overrightarrow{1}$ satisfying
non-expansion (5), uniform connectedness (7) with $n_{0}=1$ and constant
additivity (4). Then $Q^{k}g$ is linear growth of $k$,
i.e., $Q^{k}g=g+kc\cdot \overrightarrow{1}$. Then the constant $c$ of different accumulation
points are the same by the non-expansion property. And the uniqueness
of accumulation points follows from the uniform connectedness of $Q$.

First, we give a convergence lemma by the uniqueness of accumulation
points. 
\begin{lem}
\label{lem: convergence lemma by the uniqueness}Let $(X,d)$ be a
locally compact metric space. If a sequence $\left\{ x_{n}\right\}\subset X$
has exactly one accumulation point and satisfies 
\[
d(x_{k},x_{k+1})\leq C \quad \text{for some }C>0 \text{ and all } k \in\mathbb{N}_{+},
\]
then $\left\{ x_{n}\right\} $
converges. 
\end{lem}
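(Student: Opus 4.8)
The plan is to argue by contradiction, assuming $\{x_n\}$ does not converge. Since $X$ is locally compact and $g$ (the unique accumulation point) is the only accumulation point, there is a closed ball $\overline{B_\rho(g)}$ that is compact. If $\{x_n\}$ did not converge to $g$, there would be some $\varepsilon_0 \in (0,\rho)$ and infinitely many indices $n$ with $d(x_n,g) \geq \varepsilon_0$. The key geometric fact I want to exploit is that the step bound $d(x_k,x_{k+1}) \leq C$ prevents the sequence from "jumping over" the annulus $\{x : \varepsilon_0/2 \leq d(x,g) \leq \varepsilon_0\}$ too quickly: I will want $\varepsilon_0$ small enough (shrink it if necessary, and correspondingly choose a thin annulus of width less than $C$ near $g$ — actually the cleaner move is to pick the relevant shell to have outer radius $\leq \rho$ and to track entries and exits).

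The main step is then to produce infinitely many points of $\{x_n\}$ trapped inside a fixed compact set but bounded away from $g$, which contradicts uniqueness of the accumulation point. Concretely: since $g$ is an accumulation point, infinitely many $x_n$ lie in $B_{\varepsilon_0/2}(g)$; by assumption infinitely many lie outside $B_{\varepsilon_0}(g)$. Hence the sequence crosses from $B_{\varepsilon_0/2}(g)$ to the complement of $B_{\varepsilon_0}(g)$ infinitely often. Along each such crossing, consider the last index $n$ before the crossing completes at which $x_n \in \overline{B_{\varepsilon_0}(g)}$; then $d(x_n,g) \leq \varepsilon_0 \leq \rho$, so $x_n \in \overline{B_\rho(g)}$, which is compact, while also $d(x_n,g) \geq \varepsilon_0/2$ — wait, I need to be slightly more careful: I will take, for each crossing, an index $n$ with $\varepsilon_0/2 \leq d(x_n, g) \leq \rho$. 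Such an index exists in each crossing because $d(x_{k},x_{k+1}) \leq C$ forces the "distance-to-$g$" sequence, which moves from below $\varepsilon_0/2$ to above $\varepsilon_0$, to land in $[\varepsilon_0/2, \varepsilon_0/2 + C]$ at some intermediate step, and this interval is contained in $[\varepsilon_0/2, \rho]$ provided we first chose $\varepsilon_0$ small enough that $\varepsilon_0/2 + C \leq \rho$ — no, $C$ is fixed and $\rho$ is whatever compactness gives us, so instead I should just note $d(x_n,g)$ can be made to lie in $[\varepsilon_0/2,\varepsilon_0]$ by choosing, within the crossing, the first index where it exceeds $\varepsilon_0/2$: at the previous step it was $< \varepsilon_0/2$, so by the triangle inequality and the $C$-bound it is $< \varepsilon_0/2 + C$; to stay inside the compact ball I should have picked $\rho$ at the outset with $\rho \geq \varepsilon_0/2 + C$, and $\varepsilon_0$ small, which is fine since local compactness lets me take $\rho$ as large as I like near a single point only up to the compact neighborhood — so the honest fix is: first fix a compact neighborhood $\overline{B_\rho(g)}$, then choose $\varepsilon_0 < \rho$ small with $\varepsilon_0 + C \leq \rho$ possibly failing, so instead bound the crossing point's distance by $\min(\varepsilon_0/2 + C, \text{something})$; cleanest is to simply observe the crossing must pass through the compact annulus $\{\varepsilon_0/2 \leq d(\cdot,g) \leq \rho\}$ and extract from the infinitely many such points a convergent subsequence.

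That convergent subsequence has a limit $g'$ with $\varepsilon_0/2 \leq d(g',g)$, so $g' \neq g$, and $g'$ is an accumulation point of $\{x_n\}$, contradicting the hypothesis that $g$ is the only one. Hence $\{x_n\} \to g$. The only genuinely delicate point — and the one I'd expect to be the main obstacle in writing this up cleanly — is the bookkeeping that guarantees each "crossing" actually deposits a point in a fixed compact set at distance bounded below by a fixed positive constant from $g$; this is where the step bound $d(x_k,x_{k+1}) \leq C$ is used, and where one must be careful to fix the compact neighborhood first and then choose $\varepsilon_0$ small relative to it so that the intermediate crossing point (whose distance to $g$ is at most the previous distance plus $C$) still lies inside the chosen compact ball. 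Everything else is the standard "bounded sequence in a compact set has a convergent subsequence, whose limit is an accumulation point" argument.
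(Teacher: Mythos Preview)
Your approach is correct and essentially the same as the paper's: both use the step bound $d(x_k,x_{k+1})\le C$ to trap infinitely many terms in a fixed annulus around the unique accumulation point, and then extract a second accumulation point by compactness to reach a contradiction. The paper's write-up is slightly cleaner---it uses a single radius $\epsilon$ and, for each $n_k$ with $d(x_{n_k},x_0)\ge\epsilon$, takes the \emph{last} index $m_k\ge n_k$ before the sequence re-enters $B_\epsilon(x_0)$, so that $d(x_{m_k},x_0)\in[\epsilon,\epsilon+C]$---and it simply asserts compactness of this shell ``as the local compactness,'' glossing over exactly the issue you spent time worrying about; so you are not missing any ingredient the paper actually provides.
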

\begin{proof}
Let $x_{0}$ be the accumulation point. Prove by contradiction. For
some positive $\epsilon$, suppose that there is a subsequence $\left\{ x_{n_{k}}\right\} $
such that $d(x_{n_{k}},x_{0})\geq\epsilon$. For $n_{k},$ let $m_{k}\geq n_{k}$
be the smallest number such that $d(x_{m_{k}+1},x_{0})<\epsilon$,
then $d(x_{m_{k}},x_{0})\in[\epsilon,\epsilon+C]$. Hence $\left\{ x_{m_{k}}\right\} $
has an accumulation point different from $x_{0}$ as the local compactness,
which contradicts to the uniqueness of accumulation points. 
\end{proof}
\begin{rem}
We give a specific example to illustrate that the non-expansion $$d(x_{k},x_{k+1})\leq C$$
is necessary. For $X=\mathbb{R}$, let $x_{2k}=k$, $x_{2k+1}=0$,
which has exactly one accumulation point but does not satisfy $d(x_{k},x_{k+1})\leq C$
for all $k$, and $\left\{ x_{n}\right\} $ does not converge. 
\end{rem}
Next, we prove Theorem \ref{thm:NLMC convergence 3} by the uniqueness
of accumulation points via the uniform connectedness (7). Recall that
if $P$ is uniformly connected, then there exists $n_{0}\in\mathbb{N}_{+}$,
positive $\epsilon_{0}$ such that for every component $x,$ positive $\delta$
and $f,g\in\mathbb{R}^{N}$ with $f\geq g+\delta\cdot1_{x}$, we have
$P^{n_{0}}f\geq P^{n_{0}}g+\epsilon_{0}\delta \cdot \overrightarrow{1}$. 
\begin{proof}[Proof of Theorem \ref{thm:NLMC convergence 3}]
Let $\tilde{P}=P^{n_{0}}$, then $\tilde{P}$ is uniformly connected
with $n_{0}=1$, implying strict monotonicity of corresponding components (2). As $g$ is an accumulation
point, then there exist $0\leq i\leq n_{0}-1$ and a subsequence $\left\{ n_{k}=m_{k}n_{0}+i\right\} $
such that $f_{n_{k}}\to g$ and $P^{n_{k}}f(x_{0})\to a\in[-\infty,+\infty]$
as $k\to+\infty$. Divide it into two cases: $\mid a\mid<+\infty$
and $\mid a\mid=+\infty$.

\textbf{Case 1.} If $\mid a\mid<+\infty$, w.l.o.g, suppose $a>0$.

Then as $k\to+\infty$, $$P^{n_{k}}f=\tilde{P}^{m_{k}}P^{i}f\eqqcolon\tilde{P}^{m_{k}}F\to g+a\cdot \overrightarrow{1}\eqqcolon\tilde{g}.$$
By Theorem \ref{thm:NLMC convergence 2}, we have
$\tilde{P}^{m}F\to\tilde{g}$ as $m\to+\infty$ and $\tilde{P}^{k}\tilde{g}=\tilde{g}$
for all $k\in\mathbb{N}_{+}$. Then for different accumulation points of $f_{n}$,
by the non-expansion property (5), their corresponding $a$ must be
the same case.

If there are two accumulation points $\tilde{g}_{1}\neq\tilde{g}_{2}$,
suppose $\alpha\coloneqq\underset{1\leq x\leq N}{\text{max}}\left(\tilde{g}_{1}(x)-\tilde{g}_{2}(x)\right)$ $>0$.
Then $\tilde{g}_{1}\leq\tilde{g}_{2}+\alpha \cdot \overrightarrow{1}$. If there exists $x$
such that $\tilde{g}_{1}(x)<\tilde{g}_{2}(x)+\alpha$, then by the
uniform connectedness and non-expansion property of $\tilde{P}$, 
\[
\tilde{g}_{1}=\tilde{P}\tilde{g}_{1}<\tilde{P}\left(\tilde{g}_{2}+\alpha \cdot \overrightarrow{1} \right)\leq\tilde{P}\tilde{g}_{2}+\alpha \cdot \overrightarrow{1}=\tilde{g}_{2}+\alpha \cdot \overrightarrow{1},
\]
which implies $\tilde{g}_{1}<\tilde{g}_{2}+\alpha \cdot \overrightarrow{1}$ and contradicts
to the definition of $\alpha$. Hence $\tilde{g}_{1}=\tilde{g}_{2}+\alpha \cdot \overrightarrow{1}$,
which means $\tilde{g}_{1}-\tilde{g}_{1}(x_{0}) \cdot \overrightarrow{1}=\tilde{g}_{2}-\tilde{g}_{2}(x_{0}) \cdot \overrightarrow{1}$.
Then the two accumulation points $g_{i}=\tilde{g}_{i}-\tilde{g}_{i}(x_{0}) \cdot \overrightarrow{1}$
for $i=1,2$ of $f_{n}$ are the same. By Lemma \ref{lem: convergence lemma by the uniqueness},
we get the convergence of $f_{n}$.

\textbf{Case 2.} If $|a|=+\infty$, w.l.o.g., suppose $a=+\infty$.

Since $\tilde{P}(f+r\cdot \overrightarrow{1})-r \cdot \overrightarrow{1}$ is decreasing for positive $r$ by the non-expansion
condition (5), we define 
\[
Qf\coloneqq\underset{r\to+\infty}{\text{lim}}\left(\tilde{P}(f+r\cdot \overrightarrow{1})-r \cdot \overrightarrow{1}\right).
\]
As $$Qg-g=\underset{k\to+\infty}{\text{lim}}\left(\tilde{P}^{m_{k}+1}F-\tilde{P}^{m_{k}}F\right)$$
is finite by the non-expansion property (5) of $\tilde{P}$, then for
any $f\in\mathbb{R}^{N}$, we know $Qf$ is also finite by the non-expansion property.
Then $Q$ satisfies the non-expansion property (5) and constant additivity
$Q(f+c \cdot \overrightarrow{1})=Qf+c \cdot \overrightarrow{1} $.

Now we show the connectedness of $Q$. For some component $x$, positive
$\delta$ and $f,h\in\mathbb{R}^{N}$ with $f\geq h+\delta\cdot1_{x}$,
since $\tilde{P}$ is uniformly connected with $n_{0}=1$, we have
\[
\tilde{P}\left(f+r \cdot \overrightarrow{1}\right)-r \cdot \overrightarrow{1}>\tilde{P}\left(h+r \cdot \overrightarrow{1}\right)-r\cdot \overrightarrow{1}+\epsilon_{0}\delta \cdot \overrightarrow{1}\geq Qh+\epsilon_{0}\delta \cdot \overrightarrow{1}.
\]
Let $r\to+\infty$, then $Qf>Qh+\epsilon_{0}\delta \cdot \overrightarrow{1}$ and $Q$ is
uniformly connected with $n_{0}=1$ and $\epsilon_{0}$.

Let $\lambda_{+}^{\tilde{P}}(f)\coloneqq\underset{1\leq x\leq N}{\text{max}}\left(\tilde{P}f(x)-f(x)\right)$
and $F=P^{i}f$, then $\lambda_{+}^{\tilde{P}}(\tilde{P}^{m}F)>0$
for all $m\in\mathbb{N}_{+}$ since $\tilde{P}^{m_{k}}F(x_{0})\to+\infty$. By the
monotonicity and non-expansion property, 
\[
\tilde{P}^{m+2}F\leq\tilde{P}\left(\tilde{P}^{m}F+\lambda_{+}^{\tilde{P}}(\tilde{P}^{m}F) \cdot \overrightarrow{1}\right)\leq\tilde{P}^{m+1}F+\lambda_{+}^{\tilde{P}}(\tilde{P}^{m}F) \cdot \overrightarrow{1},
\]
which means $\lambda_{+}^{\tilde{P}}(\tilde{P}^{m}F)$ is decreasing
in $m$. Since $g$ is a finite accumulation point, for all $l\in\mathbb{N}_{+}$,
we have 
\[
Qg-g=\underset{k\to+\infty}{\text{lim}}\left(\tilde{P}^{m_{k}+1}F-\tilde{P}^{m_{k}}F\right)
\]
and 
\[
Q^{l+1}g-Q^{l}g=\underset{k\to+\infty}{\text{lim}}\left(\tilde{P}^{m_{k}+l+1}F-\tilde{P}^{m_{k}+l}F\right).
\]
Then by the monotonicity of $\lambda_{+}^{\tilde{P}}(\tilde{P}^{m}F)$,
we have 
\[
\lambda_{+}^{Q}(g)=\underset{1\leq x\leq N}{\text{max}}\left(Qg(x)-g(x)\right)=\underset{m\to+\infty}{\text{lim}}\lambda_{+}^{\tilde{P}}(\tilde{P}^{m}F)=\lambda_{+}^{Q}(Q^{l}g).
\]
Since $Qg\leq g+\lambda_{+}^{Q}(g)\cdot \overrightarrow{1}$, if there is $x$ such that $Qg(x)<g(x)+\lambda_{+}^{Q}(g)$,
then 
\[
Q^{2}g<Qg+\lambda_{+}^{Q}(g)\cdot \overrightarrow{1}=Qg+\lambda_{+}^{Q}(Qg)\cdot \overrightarrow{1}
\]
by the uniform connectedness and constant additivity of $Q$, which
contradicts to the definition of $\lambda_{+}^{Q}(g)$. Hence $Qg=g+\lambda_{+}^{Q}(g)\cdot \overrightarrow{1}$
and $Q^{l}g=g+l\lambda_{+}^{Q}(g)\cdot \overrightarrow{1}$.

Let $f^{(1)},f^{(2)}\in\mathbb{R}^{N}$ and assume $g_{i}$ is an
accumulation point of $f_{n}^{(i)}=P^{n}f^{(i)}-P^{n}f^{(i)}(x_{0})\cdot \overrightarrow{1}$
for $i=1,2$. Then by the non-expansion property 
\[
\left\Vert g_{1}+l\lambda_{+}^{Q}(g_{1})\cdot \overrightarrow{1}-g_{2}-l\lambda_{+}^{Q}(g_{2})\cdot \overrightarrow{1}\right\Vert _{\infty}=\left\Vert Q^{l}g_{1}-Q^{l}g_{2}\right\Vert _{\infty}\leq\left\Vert g_{1}-g_{2}\right\Vert _{\infty},
\]
we know $\lambda_{+}^{Q}(g_{1})=\lambda_{+}^{Q}(g_{2})$. If $g_{1}\neq g_{2}$,
suppose $\alpha\coloneqq\underset{x}{\text{max}}\left(g_{1}(x)-g_{2}(x)\right)>0$.
Then $g_{1}\leq g_{2}+\alpha \cdot \overrightarrow{1}$ and $0=g_{1}(x_{0})<g_{2}(x_{0})+\alpha=\alpha$.
By the uniform connectedness of $Q$, 
\[
g_{1}+\lambda_{+}^{Q}(g_{1})\cdot \overrightarrow{1}=Qg_{1}<Qg_{2}+\alpha \cdot \overrightarrow{1}=g_{2}+\lambda_{+}^{Q}(g_{2}) \cdot \overrightarrow{1}+\alpha \cdot \overrightarrow{1},
\]
which implies $g_{1}<g_{2}+\alpha \cdot \overrightarrow{1}$ and contradicts to the definition
of $\alpha$. Then the two accumulation points are the same.

By Lemma \ref{lem: convergence lemma by the uniqueness} and letting
$f^{(1)}=f^{(2)}=f$, we get the convergence of $f_{n}$. Moreover,
the case $f^{(1)}\neq f^{(2)}$ shows the uniqueness of the limit. 
\end{proof}

\subsection{\label{subsec:An-example-of non-convergence}An example of non-convergence.}

Note that the condition of accumulation points at infinity (8) is
necessary for the convergence. Next, we give a concrete example, which
shows that without the condition of accumulation points (8), then
$P^{n}f(y)-P^{n}f(x)$ does not converge even in $\left[-\infty,\infty\right]$.
First, we prove an extension lemma. 
\begin{lem}
\label{lem:extend P form =00003D00005COmega to R^N}Suppose that a
closed subspace $\Omega\subseteq\mathbb{R}^{N}$ satisfies $a+c \cdot \overrightarrow{1}\in\Omega$
for any $a\in\Omega$ and $c\in\mathbb{R}$. If $P:\Omega\to\Omega$
satisfies uniform strict monotonicity (3) and constant additivity
(4), then $P$ can be extended to $\mathbb{R}^{N}$ with conditions
(3) and (4). 
\end{lem}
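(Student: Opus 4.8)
The plan is to define the extension by first approximating an arbitrary $f \in \mathbb{R}^N$ from below by a function in $\Omega$, applying $P$ there, and then recovering the value by a monotone limit. Concretely, for $f \in \mathbb{R}^N$ note that there is some $a \in \Omega$ with $a \le f$: indeed, taking any fixed $a_0 \in \Omega$ and using hypothesis (B), $a_0 + c \in \Omega$ for every $c$, so choosing $c \le \min_x(f(x) - a_0(x))$ gives $a_0 + c \le f$ in $\Omega$. Now define
\[
\bar{P}f \coloneqq \sup\{Pa : a \in \Omega,\ a \le f\}.
\]
The first step is to check this supremum is finite: if $a \le f$ with $a \in \Omega$, then also $a \le a + \|f - a\|_\infty \in \Omega$, and by uniform strict monotonicity (3) — in particular ordinary monotonicity, which (3) implies by Remark (b) — together with constant additivity (4), $Pa \le P(a + \|f-a\|_\infty) = Pa + \|f-a\|_\infty$; more usefully, fixing one such $a^\ast$, every competitor $a \le f$ satisfies $a \le a^\ast + 2\|f - a^\ast\|_\infty$ hence $Pa \le Pa^\ast + 2\|f-a^\ast\|_\infty$, so the sup is bounded above pointwise. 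Thus $\bar{P}f \in \mathbb{R}^N$ is well-defined, and $\bar{P}$ agrees with $P$ on $\Omega$ because for $f \in \Omega$ the choice $a = f$ is admissible and is the largest admissible $a$, so monotonicity of $P$ makes $Pf$ the supremum.

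Next I would verify the two required properties. \emph{Constant additivity (4):} the map $a \mapsto a + C$ is a bijection of $\{a \in \Omega : a \le f\}$ onto $\{a \in \Omega : a \le f + C\}$ (using (B) again), and $P(a+C) = Pa + C$ on $\Omega$, so $\bar{P}(f+C) = \sup\{P(a+C)\} = \sup\{Pa + C\} = \bar{P}f + C$. \emph{Uniform strict monotonicity (3):} suppose $f \ge g$ in $\mathbb{R}^N$. Every $a \in \Omega$ with $a \le g$ also satisfies $a \le f$, so the supremum defining $\bar{P}f$ is over a larger set and already $\bar{P}f \ge \bar{P}g$. To get the quantitative gain, let $a \le g$ with $a \in \Omega$; then $a + (f - g) \le f$, but this need not lie in $\Omega$. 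Instead observe $a + \epsilon'(f-g) $ — still not in $\Omega$ in general. So the cleaner route: for any admissible $a \le g$, consider $a' \coloneqq a + \min_x(f(x) - g(x)) \cdot \mathbf{1}$; wait, that only uses the constant (B)-shift. The honest quantitative argument is: pick admissible $a \le g$; then $a \le g \le f$, and applying (3) for $P$ on $\Omega$ is not directly available since $f \notin \Omega$.

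Here is where the \textbf{main obstacle} lies, and the fix: I would prove (3) for $\bar P$ by a limiting/covering argument using only (3) and (4) on $\Omega$. Given $f \ge g$ in $\mathbb{R}^N$, approximate $f$ by $a_k \in \Omega$, $a_k \le f$, with $Pa_k \to \bar P f$ pointwise along a suitable maximizing net, and simultaneously take $b_k = a_k + (g - f)$... which still fails to be in $\Omega$ unless $g - f$ is constant. The genuinely correct strategy, and the one I would commit to, is to define the extension not by a one-sided sup but to exploit that (B) lets us reduce the general inequality $f \ge g$ to the constant case plus an approximation: writing $f = g + (f-g)$ and noting $\|f - g\|_\infty \cdot \mathbf 1 \ge f - g \ge 0$, monotonicity gives $\bar P g \le \bar P f \le \bar P(g + \|f-g\|_\infty) = \bar P g + \|f-g\|_\infty$, which recovers non-expansion but not the $\epsilon_0$-gain of (3). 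Since the later use of this lemma (in the non-convergence example of Subsection \ref{subsec:An-example-of non-convergence}) only needs an extension satisfying (3) and (4), I expect the intended proof in fact keeps $\Omega = \mathbb{R}^N$-valued competitors available through a more careful construction — e.g. defining $\bar P f(x) \coloneqq \sup\{ Pa(x) : a \in \Omega,\ a \le f\}$ and then checking (3) by choosing, for admissible $a \le g$, the shifted competitor $a + t\,\mathbf 1 \le f$ for $t$ up to $\min(f - g)$, which yields $\bar P f \ge \bar P g + \min_x(f(x)-g(x))$; combined with a vertex-localized version of the same idea this upgrades to the full pointwise $\epsilon_0(f-g)$ bound. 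I would carry out precisely this vertex-by-vertex sharpening as the crux of the argument, and present the finiteness and additivity checks above as the routine surrounding steps.
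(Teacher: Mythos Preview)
Your extension $\bar P f = \sup\{Pa : a\in\Omega,\ a\le f\}$ does give constant additivity and ordinary monotonicity, as you verify, but it genuinely cannot deliver uniform strict monotonicity (3). The difficulty you flag is real and your proposed fixes do not close it: the ``shifted competitor $a+t\mathbf 1$'' only buys a gain of $\min_x(f(x)-g(x))$, and there is no ``vertex-localized'' upgrade available, because by hypothesis the only perturbations of $a$ guaranteed to remain in $\Omega$ are constant shifts. In the paper's own example $\Omega\subset\mathbb R^4$ is a one-parameter family of points (up to constants), so for generic $f\ge g$ there is simply no competitor $a'\in\Omega$ with $a\le a'\le f$ other than $a+t\mathbf 1$; the pointwise bound $\bar Pf\ge \bar Pg+\epsilon_0(f-g)$ is therefore unreachable from your definition.

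The paper's proof fixes exactly this by building the $\epsilon_0$ from (3) directly into the extension: it sets
\[
\bar P f \coloneqq \inf\{\,Pg - \epsilon_0(g-f)\ :\ g\in\Omega,\ g\ge f\,\}.
\]
Now if $f_1\ge f_2$, every $g\ge f_1$ also satisfies $g\ge f_2$, so the constraint set only enlarges when passing from $f_1$ to $f_2$; meanwhile the explicit $+\epsilon_0 f$ term yields $\bar P f_1\ge \bar P f_2 + \epsilon_0(f_1-f_2)$ in one line. The same correction term is what makes $\bar P$ agree with $P$ on $\Omega$ (for $f\in\Omega$ and $g\ge f$ in $\Omega$, (3) gives $Pg-\epsilon_0(g-f)\ge Pf$, so $g=f$ attains the infimum). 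Your sup-from-below idea becomes correct if you add the analogous penalty, i.e.\ $\bar P f=\sup\{Pa+\epsilon_0(f-a):a\in\Omega,\ a\le f\}$; without that term the construction fails.
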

\begin{proof}
For any $f\in\mathbb{R}^{N}$, define the extension of $P$ as 
\[
\bar{P}f\coloneqq\text{inf}\left\{ Pg-\epsilon_{0}(g-f):g\in\Omega,g\geq f\right\} ,
\]
where $\epsilon_{0}$ is defined in uniform strict monotonicity (3). Then
$\bar{P}$ satisfies constant additivity (4). If $f_{1}\geq f_{2},$
we know 
\begin{align*}
\bar{P}f_{1} & =\inf\left\{ Pg-\epsilon_{0}(g-f_{1}):g\in\Omega,g\geq f_{1}\right\} \\
 & \geq\inf\left\{ Pg-\epsilon_{0}(g-f_{1}):g\in\Omega,g\geq f_{2}\right\} \\
 & =\bar{P}f_{2}+\epsilon_{0}(f_{1}-f_{2}).
\end{align*}
Then $\bar{P}$ satisfies uniform strict monotonicity (3). Particularly,
$\bar{P}f>-\infty$ for all $f\in\mathbb{R}^{N}$. 
\end{proof}
Now we give a counterexample. Let 
\[
\Omega_{e}\coloneqq\left\{ \left(n,-n,-\varepsilon,\varepsilon\right)+\vec{c}\in\mathbb{R}^{4},n\text{ is even},\vec{c}=(c,c,c,c)\in\mathbb{R}^{4},\varepsilon\ll1\right\} ,
\]
\[
\Omega_{o}\coloneqq\left\{ \left(n,-n,\varepsilon,-\varepsilon\right)+\vec{c}\in\mathbb{R}^{4},n\text{ is odd},\vec{c}=(c,c,c,c)\in\mathbb{R}^{4},\varepsilon\ll1\right\} ,
\]
and define $P:\Omega\coloneqq\Omega_{e}\cup\Omega_{o}\to\Omega$ as
\[
\begin{array}{cc}
P\left(\left(n,-n,-\varepsilon,\varepsilon\right)+\vec{c}\right)=(n+1,-n-1,\varepsilon,-\varepsilon)+\vec{c}, & \text{if }n\text{ is even},\\
P\left(\left(n,-n,\varepsilon,-\varepsilon\right)+\vec{c}\right)=(n+1,-n-1,-\varepsilon,\varepsilon)+\vec{c}, & \text{if }n\text{ is odd}.
\end{array}
\]
Then we want to show that $P$ satisfies uniform strict monotonicity
(3). Suppose 
\[
f=(n,-n,\pm\varepsilon,\mp\varepsilon)+\vec{c}_{1},
\]
\[
g=(m,-m,\pm\varepsilon,\mp\varepsilon)+\overrightarrow{\left(c_{1}-\mid n-m\mid\right)},
\]
then $f\geq g$. And 
\begin{align*}
Pf-Pg-\frac{f-g}{2} & \geq\frac{1}{2}\left[\left(n-m,m-n,-2\varepsilon,-2\varepsilon\right)+\overrightarrow{\mid n-m\mid}\right]\\
 & \geq\left(0,0,0,0\right),
\end{align*}
which means that $P$ satisfies uniform strict monotonicity (3) with
$\epsilon_{0}=\frac{1}{2}$. By Lemma \ref{lem:extend P form =00003D00005COmega to R^N},
one can extend it to $\mathbb{R}^{N}$ with uniform strict monotonicity
(3) and constant additivity (4). But $P^{n}f(x_{3})-P^{n}f(x_{4})$
always jumps between two values $\pm2\varepsilon$ and does not
converge. 
\begin{center}
\includegraphics[scale=0.4]{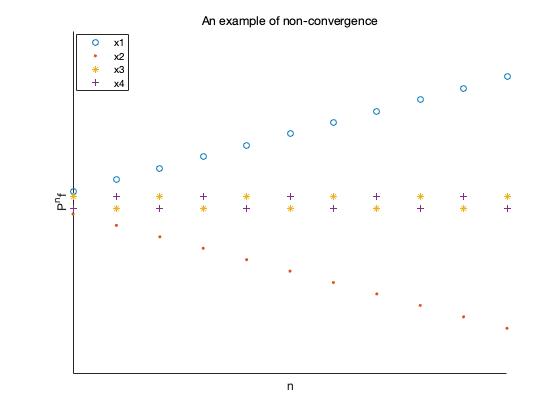} 
\par\end{center}

\begin{center}
\textbf{Figure 1.} This figure shows an example of non-convergence. 
\par\end{center}

\section{Basic facts of graphs}

The main applications of our theorems are parabolic equations on graphs.
Now we give an overview of weighted graphs.

\subsection{Weighted graphs.}

A weighted graph $G=(V,E,w,m,d)$ consists of a countable set $V$, 
a symmetric function $w:V\times V\to[0,+\infty)$ called edge weight
with $w=0$ on the diagonal, and a function $m:V\to\left(0,+\infty\right)$
called vertex weight. The edge weight $w$ induces a symmetric
edge relation $E=\left\{ \left(x,y\right):w(x,y)>0\right\} $. We write $x\sim y$ if $(x,y) \in E$. In the following, we only consider
locally finite graphs, i.e., for every $x\in V$ there are only finitely
many $y\in V$ with $w(x,y)>0$. The degree at $x$ defined as $\text{ Deg}(x)=\underset{y\sim x}{\mathop{\sum}}\frac{w(x,y)}{m(x)}$.
We say a metric $d:V^{2}\rightarrow[0,+\infty)$ is a path metric
on a graph $G$ if 
\[
d(x,y)=\inf\left\{ \stackrel[i=1]{n}{\sum}d(x_{i-1},x_{i}):x=x_{0}\sim\ldots\sim x_{n}=y\right\} .
\]
By assigning each edge of length one, we get the combinatorial distance. The space of all functions defined on the vertex set $V$ is denoted by $\mathbb{R}^V$.
For all $f\in\mathbb{R}^{V},$  define the difference operator for any $x\sim y$
as
\[
\nabla_{xy}f=f(y)-f(x).
\]
The Laplace operator is defined as 
\[
\Delta f(x)\coloneqq\frac{1}{m(x)}\underset{y}{\sum}w(x,y)\nabla_{xy}f.
\]
For $f\in\mathbb{R}^{V}$, we write $\left\Vert f\right\Vert _{\infty}\coloneqq\sup_{x\in V}|f(x)|$
and for $p\geq 1$,
$$\left\Vert f\right\Vert _{p}\coloneqq \left(\underset{x,y}{\sum}\frac{w(x,y)}{m(x)}|f(x)|^{p}\right)^{1/p}.$$

\subsection{Discrete Ricci curvature.}

First, we define the Wasserstein distance on graphs.

\begin{defn}\label{Wasserstein}
    Let \( G=(V,E,w,m,d) \) be a graph, and let \( \nu_1 \) and \( \nu_2 \) be probability measures on \( G \). The Wasserstein distance between \( \nu_1 \) and \( \nu_2 \) is defined as
    \[
    W(\nu_1, \nu_2) := \inf_{\pi} \sum_{x,y \in V} \pi(x,y) d(x,y),
    \]
    where the infimum is taken over all couplings \( \pi: V \times V \to [0,1] \) satisfying:
    \[
    \sum_{y \in V} \pi(x,y) = \nu_1(x) \quad \text{and} \quad \sum_{x \in V} \pi(x,y) = \nu_2(y). 
    \]
\end{defn}

Given a weighted graph \( G=(V,E,w,m,d) \) and a vertex \( x \in V \), we define the probability measure

\begin{equation}\label{mu}
    \mu_x^\varepsilon(z) =
    \begin{cases}
        1 - \varepsilon \text{Deg}(x) & : z = x, \\
        \varepsilon w(x,z)/m(x)       & : z \sim x, \\
        0                             & : \text{otherwise},
    \end{cases}
\end{equation}
where \( 0 \le \varepsilon \le 1/\text{Deg}(x) \).

Next, we introduce the Lin-Lu-Yau-Ollivier and Ollivier Ricci curvature.

\begin{defn}\label{def-curvature}
    For a locally finite weighted graph \( G = (V, E, w, m,d) \), the \( \varepsilon \)-Ollivier Ricci curvature between vertices \( x \neq y \) is defined as
    \[
    \kappa_{\varepsilon}(x,y) := 1 - \frac{W(\mu_x^{\varepsilon}, \mu_y^{\varepsilon})}{d(x,y)}.
    \]
    The Lin-Lu-Yau-Ollivier Ricci curvature between vertices \( x \neq y \) is given by
    \begin{equation}\label{lim-LLY_curvature}
        \kappa_{LLY}(x,y) := \lim_{\varepsilon \to 0^+} \frac{1}{\varepsilon} \kappa_{\varepsilon}(x,y).
    \end{equation}
    In particular, for weighted graphs \( G \) with \( \text{Deg}(x) \le 1 \) for all \( x \in V \) (including normalized graphs), the Ollivier Ricci curvature is defined as
    \begin{equation}\label{Ollivier_curvature}
        \kappa(x,y) := \kappa_1(x,y).
    \end{equation}
\end{defn}

\begin{rem}
The limit expression (\ref{lim-LLY_curvature}) for the Lin-Lu-Yau-Ollivier Ricci curvature is well-defined due to the work of \cite{MW19}, which showed that \( \kappa_{\varepsilon}\) is a piecewise linear concave function with at most three linear parts. This ensures the existence of the limit. Furthermore, the authors \cite{MW19} derived two equivalent limit-free expressions for (\ref{lim-LLY_curvature}).
\end{rem}






\subsection{Nonlinear Laplace and resolvent operators.}

On a locally finite weighted graph $G=(V,E,w,m,d)$, for every $p\geq1$,
define the energy functional of $f\in\mathbb{R}^{V}$ as 
\[
\mathscr{E}_{p}(f)=\frac{1}{2}\underset{x,y\in V}{\sum}\frac{w(x,y)}{m(x)}\lvert \nabla_{xy}f \rvert^{p},
\]
where $\nabla_{xy}f=f(y)-f(x)$.
More explicitly, the $p$-Laplace operator $\Delta_{p}:\mathbb{R}^{V}\to\mathbb{R}^{V}$
is given by 
\[
\Delta_{p}f(x)\coloneqq\frac{1}{m(x)}\underset{y}{\sum}w(x,y)\lvert \nabla_{xy}f\rvert^{p-2} \nabla_{xy}f,\text{if }p>1,
\]
and 
\[
\Delta_{1}f(x)\in \frac{1}{m(x)}\underset{y}{\sum}w(x,y)\,\textrm{sign}(\nabla_{xy}f) \text{, }
\]
where $\text{sign}\ensuremath{\left(t\right)}=\ensuremath{\begin{cases}
\begin{array}{c}
1,\\{}
[-1,1]\\
-1,
\end{array}, & \begin{array}{c}
t>0.\\
t=0.\\
t<0.
\end{array}\end{cases}}$ Note that $p=2$ is the general discrete Laplace operator $\Delta$.

The resolvent operator of $\Delta_{p}$ is defined as $J_{\epsilon}=\left(id-\epsilon\Delta_{p}\right)^{-1}$
for $\epsilon>0$. Since $-\Delta_{p}$ is a monotone operator, i.e.,
for all $f,g\in\mathbb{R}^{V}$, we have 
\[
\left\langle -\Delta_{p}f+\Delta_{p}g,f-g\right\rangle \geq0.
\]
Then the resolvent $J_{\epsilon}$ is single-valued, monotone, and
non-expansive, i.e., 
\[
\parallel J_{\epsilon}f-J_{\epsilon}g\parallel_{\infty}\leq\parallel f-g\parallel_{\infty}.
\]
See details in \cite[Corollary 2.10]{M92} \cite[Proposition 12.19]{RW09}.
Moreover, since $\Delta_{p}$ is the subdifferential of convex functional
$\mathscr{E}_{p}$, it follows that 
\[
J_{\epsilon}f=\underset{g\in\mathbb{R}^{V}}{\text{argmin}}\left\{ \mathscr{E}_{p}(g)+\frac{1}{2\epsilon}\left\Vert g-f\right\Vert _{2}^{2}\right\} .
\]



\section{Applications}

In this section, we prove that our convergence and uniqueness results
have important applications in the Ollivier Ricci curvature flow,
the Laplacian separation flow, the nonlinear Dirichlet form, and the nonlinear
Perron-Frobenius theory.

\subsection{The convergence of Ollivier Ricci curvature flow.}

Consider a finite weighted graph $G=(V,E,w,m,d)$. For an initial metric $d_{0}$, fix some $C$ as the deletion threshold such that $C>\underset{x\sim y\sim z}{\max}\frac{d_{0}(x,y)}{d_{0}(y,z)}$. Then we can execute the following algorithm.

\begin{table}[ht]
\centering
\captionsetup{name=Algorithm} 
\begin{tabular}{p{\textwidth}}
\toprule
\textbf{Algorithm 1: Discrete Ricci flow with surgery}  \\%
\midrule
\textbf{Input:} %
Weighted graph \( G \), initial metric \( d_0 \), deletion threshold \( C \), iteration rate \( 0 < \alpha < 1 \), precision \( \delta \);\\
\textbf{Output:} %
Updated graph \( \tilde{G} \) (with connected components \( \tilde{G}_i \)), corresponding metrics \( d^i \), constant curvatures \( \kappa^i \). \\%

\begin{itemize}
  \item[1:] For each edge \( x \sim y \in E \), update the metric via the discrete Ricci flow
  \begin{equation}\label{eq: LLL curvature flow}
  d_{n+1}(x,y) \leftarrow d_{n}(x,y) - \alpha \kappa_{d_{n}}(x,y) d_{n}(x,y).
  \end{equation}
  \item[2:] If adjacent edges \( x \sim y \sim z \) satisfy \( d_{n+1}(x,y) > C d_{n+1}(y,z) \), delete the edge \( x \sim y \) from \( E \). The deletion process proceeds in non-increasing order of edge length, with ties broken by the order of appearance. Denote the updated graph by \( \tilde{G} \).
  \item[3:] On each connected component of \( \tilde{G} \), update the distance between non-adjacent vertices \( x \nsim y \) by
  \[
  d_{n+1}(x,y) \leftarrow \inf \left\{ \sum_{i=1}^{k} d_{n+1}(x_{i-1},x_{i}) : x = x_{0} \sim x_{1} \cdots \sim x_{k} = y \right\}.
  \]
  \item[4:] Repeat steps 1-3 until the precision condition \( \lVert d_{n+1}^i - d_{n}^i \rVert_{\ell^{\infty}} < \delta \) is met on every connected component \( \tilde{G}_i \), then compute the corresponding constant curvature \( \kappa^i \).
\end{itemize} \\%
\bottomrule
\end{tabular}
\caption{Discrete Ricci flow with surgery algorithm.}  
\label{tbl:algorithm1}  
\end{table}


Since the graph $G$ is finite and graphs of a single edge can not
be deleted, denote the new graph as $\tilde{G}$ after the last edge
deletion. Then on each connected component of $\tilde{G}$, the distance
ratios are bounded in $n$, and hence, $\text{log}d_{n}$ has an accumulation
point at infinity. Considering Ricci flow (\ref{eq: LLL curvature flow})
as a nonlinear Markov chain on each connected component of $\tilde{G}$,
by Theorem \ref{thm:NLMC convergence 1} we can prove that (\ref{eq: LLL curvature flow})
converges to a constant curvature metric.


\noindent \textbf{Theorem 4.} \textit{Let $d_0$ be an initial metric on a finite weighted graph $G = (V, E, w, m, d_0)$ with $\text{Deg}(x)\leq1$ for
all $x\in V$. Through the discrete Ricci flow with surgery (Algorithm \ref{tbl:algorithm1}), $\frac{d_n(e)}{\text{max } d_n(e')}$  converges to a constant-curvature metric on each connected component of the final graph $\tilde{G}$, where the max is taken over all $ e'$ in the same connected component as $e$ on $\tilde{G}$.}
\begin{proof}
For $f\in\mathbb{R}_{+}^{E}$, define $Sf\in\mathbb{R}^{V\times V}$
as 
\[
Sf(x,y)=\underset{}{\text{inf}}\left\{ \stackrel[i=1]{k}{\sum}f(x_{i-1},x_{i}):x=x_{0}\sim x_{1}\cdots\sim x_{k}=y\right\} .
\]
Note that $Sf$ is a distance function on $G$. By (\ref{eq: LLL curvature flow}),
for $f\in\mathbb{R}_{+}^{E}$, define 
\[
\tilde{P}f(x,y)\coloneqq\alpha W_{Sf}(\mu^1_x,\mu^1_y)+(1-\alpha)f(x,y),
\]
where $W_{Sf}$ is the Wasserstein distance corresponding to the distance
$Sf$, see Definition \ref{Wasserstein}. Then $\tilde{P}$ corresponds
to step 1 in Algorithm \ref{tbl:algorithm1} of the Ollivier Ricci flow, that is, 
\[
d_{n+1}\mid_{E}=\tilde{P}(d_{n}\mid_{E}).
\]
Clearly, $\tilde{P}$ satisfies monotonicity (1) and strict monotonicity of corresponding components
(2). Since 
\[
\tilde{P}(rd)=r\tilde{P}d,\:\forall r>0,
\]
define $Pf\coloneqq\text{log \ensuremath{\tilde{P}}(exp\ensuremath{(f)})}$
with $f=\text{log}\,d$. Then for every constant $c\in\mathbb{R}$,
\[
P(f+c\cdot \overrightarrow{1})=\text{log}(\tilde{P}(\exp f\cdot\exp c ))=\text{log}(\exp c\,\tilde{P}(\exp f))=c\cdot \overrightarrow{1}+Pf,
\]
which implies that $P$ satisfies constant additivity (4). And $P$
also satisfies monotonicity (1) and strict monotonicity of corresponding components (2). After Algorithm \ref{tbl:algorithm1}, the deletion process (steps 2 and 3) ensures that, on every connected component of the final graph $\tilde{G}$ containing $ e'$, the ratio
$\frac{d_{n}}{d_{n}( e')}$ has a finite positive accumulation point
$d$. Hence, $g=\text{log}\,d$ is a finite accumulation point of
$P^{n}f-P^{n}f(e')\cdot \overrightarrow{1}$. Then by Theorem \ref{thm:NLMC convergence 1},
we know that $P^{n}f-P^{n}f( e')\cdot \overrightarrow{1}$ converges to $g$. Moreover,
we know $Pg=g+c\cdot \overrightarrow{1}$ and $\tilde{P}d=\tilde{c}d$. That is, its curvature
is a constant. 
\end{proof}
Take a simple example to illustrate Algorithm \ref{tbl:algorithm1}. Let $G=(V,E,w,m,d)$ be a normalized graph with unit edge weights, where $V=\{x_i\}_{i=0}^5$, $E=\{x_0x_1,x_0x_2,x_1x_2,$ $x_2x_3,x_3x_4,x_3x_5,x_4x_5\}$, $w\equiv1$, $m(x)=|\{y\in V:y \sim x\}|$ and $d$ is the combinatorial distance
\[
d(x,y)=\inf\left\{ n:x=x_{0}\sim\ldots\sim x_{n}=y\right\} .
\]
The length of each edge and its corresponding Ollivier curvature are indicated in Figure 2.
\begin{center}
\includegraphics[scale=0.33]{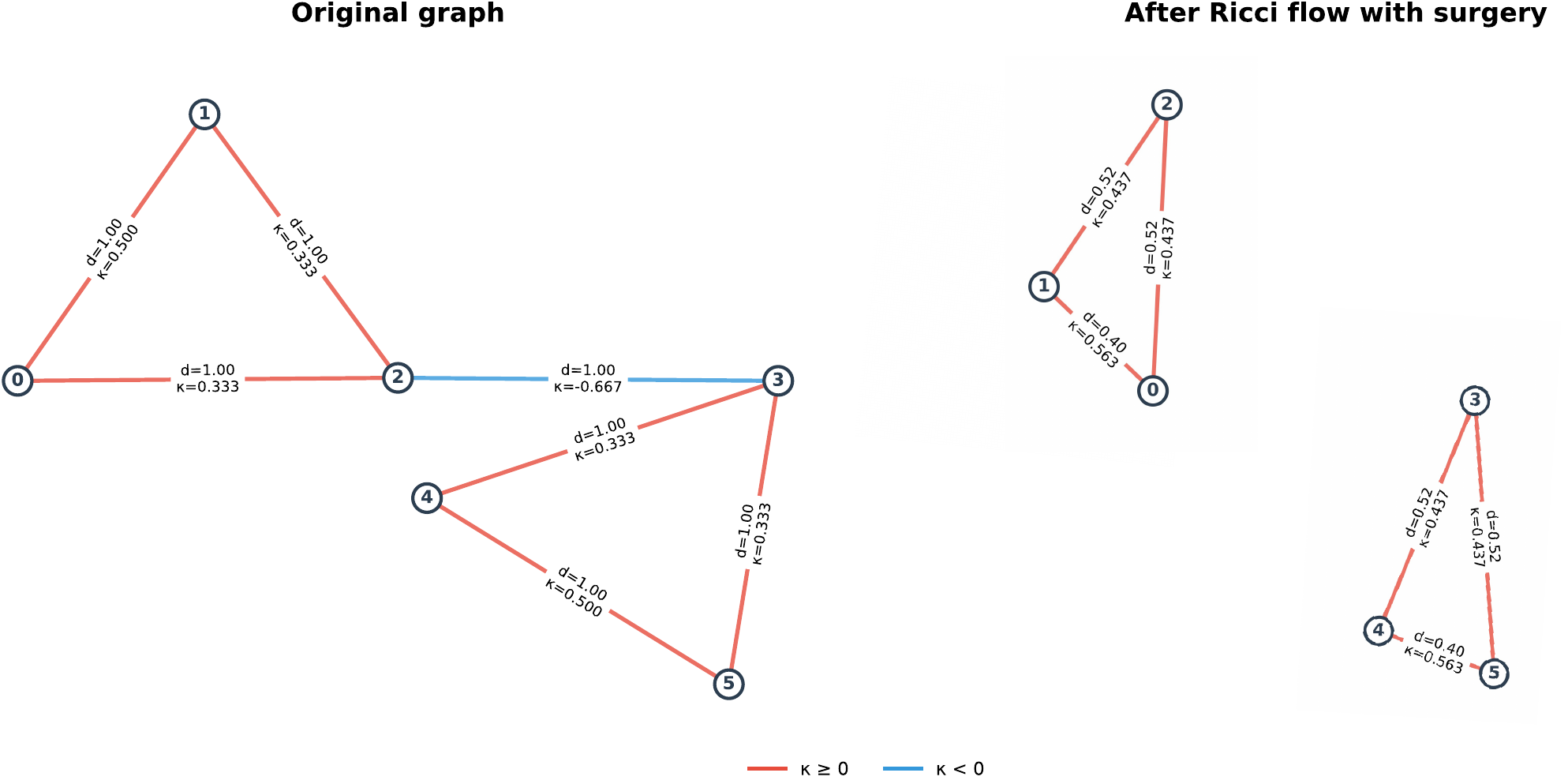} 
\par\end{center}

\begin{center}
\textbf{Figure 2.} An example of Algorithm 1. 
\par\end{center}

Setting $\alpha=0.7$, through the Ricci flow (\ref{eq: LLL curvature flow}), all edges with positive curvature are shortened, while the edge of negative curvature $x_2x_3$ is elongated. Once its length exceeds the prescribed threshold $C=1.5$, it is removed. The resulting graph consists of two 3-cycles (see the right panel of Figure 2).

\subsection{The gradient estimate for resolvents of nonlinear Laplace.}

It is shown in \cite{MW19} that a lower Ollivier curvature bound
is equivalent to a gradient estimate for the continuous time heat
equation. In \cite{LL18,M18,m19li,dier2021discrete,weber2023li,horn2019volume}, the gradient estimates have been proved
under Bakry-Emery curvature bounds. In \cite{CMS24}, the authors
proved that the nonnegative sectional curvature implies a logarithmic
gradient estimate. Gradient estimates of the discrete random walk $id+\varepsilon\Delta$  have been proved in \cite{BCMP18,LLY11,HM21}. In \cite[Theorem 5.2.]{IKTU21},
the authors showed a gradient estimate for the coarse Ricci curvature
defined on hypergraphs.

Here we modify the definition of Ollivier Ricci curvature and prove
the Lipschitz decay for nonlinear parabolic equations. On a locally
finite weighted graph $G=(V,E,w,m,d_{0})$ with the combinatorial distance
$d_{0}$, for all $f\in\mathbb{R}^{V}$ define 
\[
\Delta_{\phi}f(x)\coloneqq\underset{}{\underset{y}{\sum}\frac{w(x,y)}{m(x)}\phi(f(y)-f(x)),}
\]
where $\phi:\mathbb{R}\rightarrow\mathbb{R}$, is odd, increasing,
and either convex or concave on $\mathbb{R}_{+}$. Recall the transport
plan set for $x\neq y\in V$
\[
\Pi\coloneqq\left\{ \pi:B_{1}(x)\times B_{1}(y)\to[0,\infty):\begin{array}{c}
	\underset{x'\in B_{1}(x)}{\sum}\pi(x',y')=\frac{w(y,y')}{m(y)}\text{ for all }y'\sim y,\\
	\underset{y'\in B_{1}(y)}{\sum}\pi(x',y')=\frac{w(x,x')}{m(x)}\text{ for all }x'\sim x,
\end{array}\right\},
\]
where $B_{1}(x)=\{x'|x'\sim x\}\cup \{x\}$.
Then modify the curvature as 
\begin{equation}
	\hat{k}_{\phi}(x,y)\coloneqq\underset{\pi_{\phi}\in\Pi_{\phi}}{\text{sup}}\underset{x',y'\in B_{1}(x)\times B_{1}(y)}{\sum}\pi_{\phi}(x',y')\left(1-\frac{d_{0}(x',y')}{d_{0}(x,y)}\right),\forall x\neq y\in V,\label{eq:new curvature}
\end{equation}
where 
\[
\Pi_{\phi}\coloneqq\left\{ \pi_{\phi}\in\Pi:\begin{array}{c}
	\pi_{\phi}(x',y')=0\text{ if }x'=y'\text{ for convex \ensuremath{\phi}},\\
	\pi_{\phi}(x',y')=0\text{ if }x'\neq x,y'\neq y\text{ and }d_{0}(x',y')=2\text{ for concave \ensuremath{\phi}.}
\end{array}\right\} .
\]

Then we give the gradient estimate for resolvents of nonlinear Laplace. 
\begin{thm}
	\label{thm: Lipschitz decay}Let $G=(V,E,w,m,d_{0})$ be a locally finite weighted graph with combinatorial distance $d_{0}$. If the modified curvature defined in (\ref{eq:new curvature}) has
	a lower bound $\inf_{x\neq y \in V}\hat{k}_{\phi}(x,y)\geq K\geq 0$, then for any $f\in \mathbb{R}^V$ with $\text{Lip}(f):=\sup_{\substack{x\neq y\in V}}\frac{|f(x)-f(y)|}{d_0(x,y)}$ $>0$
	the resolvent $J_{\epsilon}=\left(id-\epsilon\Delta_{\phi}\right)^{-1}$
	satisfies the Lipschitz decay 
	\[
	Lip(J_{\epsilon}f)\leq Lip(f)\left(1+\epsilon\left(Lip(f)\right)^{-1}\phi(Lip(f))K\right)^{-1},
	\]
	and $Lip(J_{\epsilon}f)=0$ for $\text{Lip}(f)=0$. 
\end{thm}
\begin{proof}
	For any $f\in \mathbb{R}^V$ and $x\sim y \in E$, suppose $Lip(f)=C$, $f(y)=C$ and $f(x)=0$,
	then for any $\pi_{\phi}(x',y')$ satisfying the conditions in (\ref{eq:new curvature}),
	\[
	\Delta_{\phi}f(x)-\Delta_{\phi}f(y)=\underset{x',y'}{\sum}\pi_{\phi}(x',y')\left[\phi(f(x')-f(x))-\phi(f(y')-f(y))\right].
	\]
	If $d_{0}(x',y')=1$, then 
	\[
	f(x')-f(x)\geq f(y')-C-(f(y)-C)=f(y')-f(y).
	\]
	Since $\phi$ is increasing, we know 
	\[
	\phi(f(x')-f(x))-\phi(f(y')-f(y))\geq0=d_{0}(x,y)-d_{0}(x',y').
	\]
	If $d_{0}(x',y')=2$, and $x'\neq x$ and $y'\neq y$, then $f(x')-f(x)\geq-C$
	and $f(y')-f(y)\leq C$, and $f(y')-f(x')\leq C$. For convex $\phi$,
	such as $p$-Laplace ($p\geq2$), we have 
	\[
	\phi(f(x')-f(x))-\phi(f(y')-f(y))\geq-\phi(C).
	\]
	If $d_{0}(x',y')=2$ and either $x'=x$ or $y'=y$, then 
	\[
	\phi(f(x')-f(x))-\phi(f(y')-f(y))\geq-\phi(C).
	\]
	If $d_{0}(x',y')=0$, then $0\leq f(x')=f(y')\leq C$. And for concave
	$\phi$, such as $p$-Laplace ($1<p<2$), we have 
	\[
	\phi(f(x')-f(x))-\phi(f(y')-f(y))=\phi(f(x'))-\phi(f(x')-C)\geq\phi(C).
	\]
	Hence, 
	\begin{align*}
		& \Delta_{\phi}f(x)-\Delta_{\phi}f(y)\\
		= & \underset{x',y'}{\sum}\pi_{\phi}(x',y')\left[\phi(f(x')-f(x))-\phi(f(y')-f(y))\right]\\
		= & \left(\underset{d(x',y')=1}{\sum}+\underset{d(x',y')=2}{\sum}+\underset{d(x',y')=0}{\sum}\right)\pi_{\phi}(x',y')\left[\phi(f(x')-f(x))-\phi(f(y')-f(y))\right]\\
		\geq & \phi(C)\underset{x',y'}{\sum}\pi_{\phi}(x',y')\left[d_{0}(x,y)-d_{0}(x',y')\right].
	\end{align*}
	That is, 
	\[
	\Delta_{\phi}f(x)-\Delta_{\phi}f(y)\geq\phi(C)\hat{k}_{\phi}(x,y)d_{0}(x,y)\geq\phi(C)K.
	\]
	Then 
	\[
	(id-\epsilon\Delta_{\phi})f(y)-(id-\epsilon\Delta_{\phi})f(x)\geq C+\epsilon\phi(C)K.
	\]
	For $g\in\mathbb{R}^{V}$, since $Lip(f)=|\nabla f|_{\infty}:=\underset{x\sim y}{\text{sup}}|\nabla_{xy}f|$,
	then by the definition of $J_{\epsilon}$, 
	\begin{align*}
		\underset{|\nabla g|_{\infty}\leq c}{\text{sup}}|\nabla J_{\epsilon}g|_{\infty} & =\underset{|\nabla g|_{\infty}\leq c}{\text{sup}}|\nabla\left(id-\epsilon\Delta_{\phi}\right)^{-1}g|_{\infty}\\
		& =\underset{|\nabla\left(id-\epsilon\Delta_{\phi}\right)h|_{\infty}\leq c}{\text{sup}}|\nabla h|_{\infty}\\
		& =\left(\underset{|\nabla h|_{\infty}\geq c^{-1}}{\text{inf}}|\nabla(id-\epsilon\Delta_{\phi})h|_{\infty}\right)^{-1}.
	\end{align*}
	Thus, we know $$Lip(J_{\epsilon}f)\leq C^{2}\left(C+\epsilon\phi(C)K\right)^{-1}=C\left(1+\epsilon C^{-1}\phi(C)K\right)^{-1}.$$ 
\end{proof}
\begin{rem}
	Since $\Delta_{1}$ is a set-valued function, we cannot directly apply Theorem \ref{thm: Lipschitz decay} to obtain the Lipschitz decay property. The energy functional $\mathscr{E}_{p}(f)$ is uniformly continuous
	with respect to $p$, which means that for any $\delta>0$, there
	exists $p$ only depending on $\delta$ such that for all $f_{0}\in\mathbb{R}^{V}$,
	\[
	\underset{f:\left\Vert f-f_{0}\right\Vert {}_{\infty}\leq1}{\sup}|\mathscr{E}_{p}(f)-\mathscr{E}_{1}(f)|\leq\delta.
	\]
	For fixed $f\in\mathbb{R}^{V}$ and $\epsilon>0$, the resolvent $J_{\epsilon}^{p}f=\underset{g\in\mathbb{R}^{V}}{\text{argmin}}\left\{ \mathscr{E}_{p}(g)+\frac{1}{2\epsilon}\left\Vert g-f\right\Vert _{2}^{2}\right\} $
	is also continuous with respect to $p$. Hence, by the Lipschitz decay
	property of $J_{\epsilon}^{p}$ for $p>1$, we can deduce the Lipschitz
	decay for $J_{\epsilon}^{1}$. 
\end{rem}

\subsection{The convergence of Laplacian separation flow.}

\noindent Recall that the extremal 1-Lipschitz extension operator
$S$ is defined as $S:\mathbb{R}^{K}\to\mathbb{R}^{V}$, 
\[
Sf(x)\coloneqq\begin{cases}
\begin{array}{c}
f(x):\\
\underset{y\in K}{\text{min}}\left(f(y)+d(x,y)\right):\\
\underset{y\in K}{\text{max}}\left(f(y)-d(x,y)\right):
\end{array} & \begin{array}{c}
x\in K,\\
x\in Y,\\
x\in X,
\end{array}\end{cases}
\]
where $d:V^2\to \mathbb{R}_+$ is a graph distance function on $G$. Then $S(Lip(1,K))\subseteq Lip(1,V)$.
In \cite{HM21}, it is proven via elliptic methods that there exists
some $g$ with $\Delta Sg=\text{const}$. Here we give the parabolic
flow $(id+\epsilon\Delta)S$, and show that it converges to the
constant Laplacian solution, assuming nonnegative Ollivier Ricci
curvature.

\noindent \textbf{Theorem 5.} \textit{Let $G$ be a locally finite graph with nonnegative Ollivier curvature, and let $x_0 \in K$. Define 
$P \coloneqq \left((id + \epsilon \Delta) S\right)\big|_K$,
where $\epsilon > 0$ is sufficiently small so that $diag(id + \epsilon \Delta)$ is positive on $C_0(\bar{K})$. Then for any $f \in Lip(1, K)$, there exists $g \in Lip(1, K)$ such that
\[
P^n f - P^n f(x_0) \cdot \overrightarrow{1} \to g,
\]
and
\[
\left.\Delta Sg\right|_X \geq \left.\Delta Sg\right|_K \equiv \text{const} \geq \left.\Delta Sg\right|_Y.
\] } 
\begin{proof}
We can check that $P$ satisfies monotonicity (1), strict monotonicity of corresponding components
(2), and constant additivity (4). Since the nonnegative Ollivier
Ricci curvature implies Lipschitz decay property of $P$, i.e., the
range of $P$ is $Lip(1,K)$, then there is a finite accumulation
point $g$ of $f_{n}=P^{n}f-P^{n}f(x_{0})\cdot \overrightarrow{1}$. By Theorem \ref{thm:NLMC convergence 1},
we can get the convergence and $g$ is a stationary point. Then $\left.\Delta Sg\right|_X \geq \left.\Delta Sg\right|_K \equiv \text{const} \geq \left.\Delta Sg\right|_Y$ since the nonnegative Ollivier
Ricci curvature.  
\end{proof}
Next, we aim to generalize the result to nonlinear cases. For $p\geq1$,
the resolvent operator of $p$-Laplace operator $\Delta_{p}$ is defined
as $J_{\epsilon}=\left(id-\epsilon\Delta_{p}\right)^{-1}$ for $\epsilon>0$.
Then $J_{\epsilon}$ is monotone \cite[Proposition 12.19]{RW09}.
Moreover, the following lemma asserts that the resolvent $J_{\epsilon}$
satisfies the strict monotonicity of corresponding components property (2). 
\begin{lem}
\label{lem:strict monotonicity}If $f\geq g+\delta|V|1_{x}$,
where $1_{x}(x)=1$ and $1_{x}(y)=0$ for $y\neq x$, then $J_{\epsilon}f(x)\geq J_{\epsilon}g(x)+\delta.$ 
\end{lem}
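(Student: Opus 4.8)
The plan is to combine the three properties of the resolvent recalled earlier --- $J_{\epsilon}$ is single-valued, order preserving and non-expansive --- with a discrete elliptic maximum principle for $id-\epsilon\Delta_{p}$. First I would reduce to the extremal configuration: since $f\ge g+\delta|V|1_{x}=:g'$ and $J_{\epsilon}$ is order preserving, $J_{\epsilon}f\ge J_{\epsilon}g'$, so it suffices to prove $J_{\epsilon}g'(x)\ge J_{\epsilon}g(x)+\delta$; that is, we may assume from the start that $f-g=\delta|V|1_{x}$ exactly.

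Put $u\coloneqq J_{\epsilon}f$, $v\coloneqq J_{\epsilon}g$ and $h\coloneqq u-v$. Since $u-\epsilon\Delta_{p}u=f$ and $v-\epsilon\Delta_{p}v=g$, subtraction gives
\[
h(z)-\epsilon\bigl(\Delta_{p}u(z)-\Delta_{p}v(z)\bigr)=(f-g)(z)=\delta|V|\,1_{x}(z)\qquad(z\in V).
\]
Order preservation (equivalently, the maximum principle below applied at a global minimum of $h$ together with $f\ge g$) gives $h\ge 0$. The crucial observation is that $t\mapsto|t|^{p-2}t$ is nondecreasing and odd: if $z^{\ast}$ is a vertex at which $h$ attains its global maximum, then $u(y)-u(z^{\ast})\le v(y)-v(z^{\ast})$ for every neighbour $y$, hence $\Delta_{p}u(z^{\ast})\le\Delta_{p}v(z^{\ast})$, and the displayed identity forces $h(z^{\ast})\le\delta|V|\,1_{x}(z^{\ast})$. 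If $z^{\ast}\ne x$ this would give $0\le h(z^{\ast})\le 0$, so $h\equiv 0$, so $u=v$ and $f=g$, contradicting $f-g=\delta|V|1_{x}\not\equiv 0$; alternatively, multiplying the identity by $m(z)$ and summing over $V$ annihilates the $\Delta_{p}$-terms (by the symmetry $w(z,y)=w(y,z)$ and the oddness of $t\mapsto|t|^{p-2}t$), giving $\sum_{z}m(z)h(z)=m(x)\,\delta|V|>0$, which again rules out $h\equiv 0$. Therefore $h$ attains its maximum at $x$, and comparing the maximum with the $m$-average,
\[
h(x)=\max_{z\in V}h(z)\ \ge\ \frac{\sum_{z}m(z)\,h(z)}{\sum_{z}m(z)}=\frac{m(x)\,|V|}{\sum_{z}m(z)}\,\delta,
\]
which equals $\delta$ when $m\equiv 1$ and in any case is a fixed positive multiple of $\delta$ --- all that is needed to deduce the strict monotonicity condition~(2) for $J_{\epsilon}$.

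The step I expect to be delicate is the maximum-principle comparison at the borderline exponents. For $1<p<2$ the slope of $t\mapsto|t|^{p-2}t$ blows up at $0$, but the argument above uses only its monotonicity, so this is harmless; the genuine subtlety is $p=1$, where $\Delta_{1}$ is set-valued, so ``$\Delta_{p}u-\Delta_{p}v$'' must be read along the selections $(u-f)/\epsilon\in\Delta_{1}u$ and $(v-g)/\epsilon\in\Delta_{1}v$ singled out by the (single-valued) resolvent. For $p=1$ I would instead obtain the estimate by approximation, letting $p\downarrow 1$ and using the continuity of $J^{p}_{\epsilon}f$ in $p$ noted in the remark after Theorem~\ref{thm: Lipschitz decay}. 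A minor point is that the summation arguments presume $V$ finite (or $m$-summability of $h$), which is the setting in which the lemma is used.
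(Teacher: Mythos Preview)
Your argument is correct, but it is genuinely different from the paper's. The paper never touches the pointwise structure of $\Delta_{p}$; instead it uses only two abstract facts about the resolvent: operator monotonicity $\langle J_{\epsilon}f-J_{\epsilon}g,\,f-g\rangle\ge 0$ and constant additivity $J_{\epsilon}(f+c)=J_{\epsilon}f+c$. Concretely, it first perturbs by the \emph{zero--mean} vector $\delta(|V|\,1_{x}-1)$, reads off $(J_{\epsilon}f-J_{\epsilon}g)(x)\ge 0$ from the monotonicity inequality together with mass conservation $\langle J_{\epsilon}f-J_{\epsilon}g,1\rangle=\langle f-g,1\rangle=0$, and then shifts by $+\delta$ to land on $g+\delta|V|\,1_{x}$ and conclude. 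Your route, by contrast, writes $h=J_{\epsilon}f-J_{\epsilon}g$ explicitly, applies an elliptic comparison principle for $id-\epsilon\Delta_{p}$ at the maximizer of $h$, and then bounds the maximum from below by the $m$--average of $h$. What you gain is a self-contained proof that $J_{\epsilon}$ is order preserving (your minimum argument) and the structural information that $h$ is actually maximized at $x$; what the paper gains is brevity and generality, since its proof works verbatim for the resolvent of any monotone, constant-additive, mass-preserving operator, not just $\Delta_{p}$. Your caveat about the constant $\frac{m(x)|V|}{\sum_{z}m(z)}$ versus $\delta$ is fair, and the paper's computation has the same hidden normalization (it is written for the unweighted pairing, effectively $m\equiv 1$); for the sole purpose of verifying strict monotonicity~(2), either bound suffices. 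Your handling of $p=1$ by $p\downarrow 1$ is also in line with the paper's remark on continuity of $J^{p}_{\epsilon}$.
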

\begin{proof}
Since $J_{\epsilon}$ is monotone, which means $\left\langle J_{\epsilon}f-J_{\epsilon}g,f-g\right\rangle \geq0.$
Take $f=g+\delta\left(|V|1_{x}- \overrightarrow{1}\right)$, then $\left\langle f-g,\overrightarrow{1}\right\rangle =0$.
By the monotone property, 
\[
\left\langle J_{\epsilon}f-J_{\epsilon}g,\delta\left(|V|1_{x}-\overrightarrow{1}\right)\right\rangle \geq0,
\]
which implies $|V|\left(J_{\epsilon}f-J_{\epsilon}g\right)(x)\geq\left\langle J_{\epsilon}f-J_{\epsilon}g, \overrightarrow{1}\right\rangle =0.$
And set $\tilde{f}=\delta \cdot \overrightarrow{1}+f=g+\delta|V|1_{x}$ which satisfies
$\tilde{f}\geq g+\delta|V|1_{x}$, then 
\[
J_{\epsilon}\tilde{f}(x)=\delta+J_{\epsilon}f(x)\geq\delta+J_{\epsilon}g(x).
\]
This finishes the proof.
\end{proof}
Recall that a new curvature $\hat{k}_{\phi}(x,y)$ is defined in (\ref{eq:new curvature}),
whose transport plans forbid 3-cycles for convex $\phi$ on $\mathbb{R}_{+}$
and forbid 5-cycles for concave $\phi$ on $\mathbb{R}_{+}$.


\noindent \textbf{Theorem 6.} \textit{Let $G$ be a locally finite graph with a nonnegative modified curvature $\hat{k}$, and let $x_0 \in K$. Define $P:=\left((id+\epsilon\Delta_{p})S\right)\mid_{K}$, where $\epsilon > 0$ is sufficiently small so that $\text{diag}(id+\epsilon\Delta_{p})$
is positive on $C_0(\bar{K})$. Then for all $f\in Lip(1,K)$, there exists $\tilde{f}\in Lip(1,K)$
such that $$P^{n}f-P^{n}f(x_{0}) \cdot \overrightarrow{1}\to \tilde{f}.$$ Moreover,
there exist $h,g\in\mathbb{R}^{V}$ such that $g\in\Delta_{p}Sh$
and $g\mid_{X}\geq g\mid_{K}\equiv\text{const}\geq g\mid_{Y}$, where $Sh:=S(h|_K)$.}
\begin{proof}
By Lemma \ref{lem:strict monotonicity} we know $J_{\epsilon}$ satisfies
strict monotonicity of corresponding components property. It is also constant additive by the
definition of resolvent $J_{\epsilon}$, then \(P:=J_{\epsilon}S\mid_{K}\) also satisfies
the same property. For the nonnegative curvature $\hat{k}_{\phi}$
defined as (\ref{eq:new curvature}), by the gradient estimate of
Theorem \ref{thm: Lipschitz decay}, the range of $P$ still is $Lip(1,K)$.
Then there is an accumulation point at infinity. Hence by Theorem
\ref{thm:NLMC convergence 1}, there exists $\tilde{f}\in Lip(1,K)$
such that $P\tilde{f}=\tilde{f}+\text{const}\cdot \overrightarrow{1}$, which implies that
\begin{equation}
SJ_{\epsilon}S\tilde{f}=S\tilde{f}+S(\text{const} \cdot \overrightarrow{1}).\label{eq:the result by the thm}
\end{equation}
Define $h_{\epsilon}\coloneqq J_{\epsilon}S\tilde{f}$ and substitute
it into the above formula (\ref{eq:the result by the thm}), then
we can get \(Sh_{\epsilon}-h_{\epsilon}+\epsilon\Delta_{p}h_{\epsilon}=S(\text{const}\cdot \overrightarrow{1})\). Note that $h_{\epsilon}\neq Sh_{\epsilon}$,
but we claim that $\left\Vert h_{\epsilon}-Sh_{\epsilon}\right\Vert _{\infty}\leq c\epsilon$
for some constant $c$. Since 
\[
\left\Vert h_{\epsilon}-S\tilde{f}\right\Vert _{\infty}=\left\Vert J_{\epsilon}S\tilde{f}-S\tilde{f}\right\Vert _{\infty}\leq\epsilon\left\Vert \Delta_{p}S\tilde{f}\right\Vert _{\infty}\leq\epsilon\,\underset{x}{\max}\,\text{Deg}(x),
\]
and it also holds on $K$, that is, 
\[
\left\Vert Sh_{\epsilon}-SS\tilde{f}\right\Vert _{\infty}=\left\Vert Sh_{\epsilon}-S\tilde{f}\right\Vert _{\infty}\leq\epsilon.
\]
So by the triangle inequality, we get $\left\Vert h_{\epsilon}-Sh_{\epsilon}\right\Vert _{\infty}\leq c\epsilon.$
Then we get $h_{\epsilon}\to h$ for some subsequence as $\epsilon\to0$ and $h=Sh$
by the compactness. Take a subsequence $\left\{ g_{\epsilon}\right\} $
such that $g_{\epsilon}\in\Delta_{p}h_{\epsilon}$ and $g_{\epsilon}|_{X}\geq g_{\epsilon}|_{K}\equiv\text{const}\geq g_{\epsilon}|_{Y}$
and $g_{\epsilon}\to g.$ By the continuity, we know $g\in\Delta_{p}Sh$
and $g|_{X}\geq g|_{K}\equiv\text{const}\geq g\mid_{Y}$. 
\end{proof}

\subsection{The nonlinear Dirichlet form\label{subsec:The-nonlinear-Dirichlet}.}

In the theory of nonlinear Dirichlet form, one has a correspondence
between such forms, semigroups, resolvents, and operators satisfying
suitable conditions. Since the assumptions of our theorems fit well
in the nonlinear Dirichlet form theory, we can apply our theorems
to study the long-time behavior of associated continuous semigroups.
First, we recall the definition of the nonlinear Dirichlet form \cite{CB21}. 
\begin{defn}\label{Def:Dirichletform}
Let $\mathscr{E}:$ $\mathbb{R}^{N}\to[0,\infty]$ be a convex and
lower semicontinuous functional with dense effective domain. Then
the subgradient $-\partial\mathscr{E}$ generates a strongly continuous
contraction semigroup $T$, that is, $u(t)=T_{t}u_{0}$ satisfies
\[
\begin{cases}
\begin{array}{c}
0\in\frac{du}{dt}(t)+\partial\mathscr{E}(u(t)),\\
u(0)=u_{0}
\end{array}\end{cases}
\]
pointwise for almost all $t\geq0$. We call $\mathscr{E}$ a Dirichlet
form if the associated strongly continuous contraction semigroup $T$
is sub-Markovian, which means $T$ is order-preserving and $L^{\infty}$
contractive, that is, for all $u,v\in\mathbb{R}^{N}$ and all
$t\geq0$, 
\[
u\leq v\Rightarrow T_{t}u\leq T_{t}v
\]
and 
\[
\left\Vert T_{t}u-T_{t}v\right\Vert _{\infty}\leq\left\Vert u-v\right\Vert _{\infty}.
\]
\end{defn}
For the definition of the nonlinear Dirichlet form in \cite{J98},
it satisfies the following lemma. 
\begin{lem}
\cite[Lemma 1.1]{J98} For a nonlinear Dirichlet form $\mathscr{E}$,
if $f\in\mathbb{R}^{N}$ and $a,\lambda\in\mathbb{R}$, then 
\begin{equation}
\begin{array}{c}
\mathscr{E}(\lambda f)=\lambda^{2}\mathscr{E}(f),\\
\mathscr{E}(f+a\cdot \overrightarrow{1})=\mathscr{E}(f).
\end{array}\label{eq: property constant additivity}
\end{equation}
\end{lem}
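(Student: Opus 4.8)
The plan is to derive both identities directly from the defining axioms of a nonlinear Dirichlet form in \cite{J98}; the nonlinear Markov chain results of Section 2 are not needed here. The two ingredients I would use are (a) the Markov (normal contraction) property, that post-composition with a $1$-Lipschitz map $\eta\colon\mathbb{R}\to\mathbb{R}$ does not increase $\mathscr{E}$, and (b) the convexity, lower semicontinuity and non-negativity of $\mathscr{E}$ together with the normalisations $\mathscr{E}(0)=0$ and $\mathscr{E}(\mathrm{const})=0$.

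For the constant-additivity identity $\mathscr{E}(f+a)=\mathscr{E}(f)$, I would exploit that the translation $\tau_a\colon x\mapsto x+a$ is a bijective isometry of $\mathbb{R}$, so both $\tau_a$ and its inverse $\tau_{-a}$ are $1$-Lipschitz. Applying the Markov property to $\tau_a\circ f$ gives $\mathscr{E}(f+a)\le\mathscr{E}(f)$, and applying it to $\tau_{-a}\circ(f+a)$ gives $\mathscr{E}(f)\le\mathscr{E}(f+a)$; hence equality. (Should the normal-contraction axiom in \cite{J98} insist that $\eta$ fix the origin, I would instead first establish the quadratic structure below and then note that $c\mapsto\mathscr{E}(f+c\mathbf 1)$ is affine in $c\in\mathbb{R}$ with leading coefficient $\mathscr{E}(\mathbf 1)=0$; being affine and non-negative on all of $\mathbb{R}$, it is constant.)

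For the $2$-homogeneity identity $\mathscr{E}(\lambda f)=\lambda^{2}\mathscr{E}(f)$, the case $\lambda=-1$ again follows from the isometry $x\mapsto-x$, so it suffices to treat $\lambda\ge0$. The substantive point is that Jost's axioms force $\mathscr{E}$ to have the structure of a (possibly degenerate, $[0,\infty]$-valued) quadratic form: concretely, I would extract a parallelogram inequality $\mathscr{E}(f+g)+\mathscr{E}(f-g)\le 2\mathscr{E}(f)+2\mathscr{E}(g)$ from the Markov property applied to a suitable family of $1$-Lipschitz maps, combine it with convexity to upgrade it to the parallelogram identity, and then run a Jordan--von Neumann / polarization argument (legitimate for lower semicontinuous convex functionals on $\mathbb{R}^{N}$) to conclude $\mathscr{E}=\tfrac12 q$ for a quadratic form $q$. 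Once this is in hand, $\mathscr{E}(\lambda f)=\lambda^{2}\mathscr{E}(f)$ for every real $\lambda$ is immediate and it re-proves the constant-additivity statement as well.

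The main obstacle is precisely pinning down this quadratic structure. If \cite{J98} already lists the scaling $\mathscr{E}(\lambda f)=\lambda^{2}\mathscr{E}(f)$ (or the parallelogram identity) among its axioms, the lemma is essentially immediate from the reductions above; if not, the work is in choosing the $1$-Lipschitz contractions that, fed into the Markov property, yield the parallelogram inequality, and in then handling the extended-real-valued nature of $\mathscr{E}$ and the density of its effective domain by a routine approximation. Everything else --- the reductions via isometries and the passage from the parallelogram law to $2$-homogeneity --- is standard.
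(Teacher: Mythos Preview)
The paper gives no proof of this lemma: it is simply quoted as \cite[Lemma~1.1]{J98} and used as an input, so there is no in-paper argument to compare your proposal against. The content lives entirely in Jost's axiomatics, where the two identities are essentially immediate from the definition (they record how $\mathscr{E}$ reacts to affine self-maps of the real target), which is also why the lemma carries the number~1.1.

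Your translation argument for $\mathscr{E}(f+a)=\mathscr{E}(f)$ is fine in any framework that counts all $1$-Lipschitz self-maps of $\mathbb{R}$ as normal contractions. The concern is your route to the scaling identity. Deriving a full parallelogram \emph{identity} and then running a Jordan--von~Neumann argument would show that $\mathscr{E}$ is a genuine quadratic form and hence that the associated generator is \emph{linear}; this is strictly stronger than $\mathscr{E}(\lambda f)=\lambda^{2}\mathscr{E}(f)$ and is exactly what a \emph{nonlinear} Dirichlet form need not satisfy. So either that step proves too much, or---if Jost's real-valued theory really does collapse to the bilinear case---it is a long detour to a fact that drops out of the axioms in one line. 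You in effect acknowledge this when you hedge that the lemma is ``essentially immediate'' if \cite{J98} lists the scaling among its axioms; that is the correct branch. The parallelogram/polarisation machinery is the wrong tool here: locate in \cite{J98} the axiom governing $\mathscr{E}$ under the dilations $t\mapsto\lambda t$ and read off the $\lambda^{2}$-scaling directly.
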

Then we can apply Theorem \ref{thm:NLMC convergence 1} to obtain
the following convergence result with the accumulation point assumption. 
\begin{thm}
\label{thm: nonlinear Dirichlet form}For a nonlinear Dirichlet form
$\mathscr{E}$ with property (\ref{eq: property constant additivity}),
if the semigroup $T_{t}^{n}f$ defined in Definition \ref{Def:Dirichletform} has an accumulation point at infinity
(8) for $t>0$, and its associated generator $-\partial\mathscr{E}$
is bounded, then $T_{t}^{n}f$ converges. 
\end{thm}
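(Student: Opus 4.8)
The plan is to reduce Theorem~\ref{thm: nonlinear Dirichlet form} to Theorem~\ref{thm:NLMC convergence 1} by exhibiting the time-$t$ semigroup map $P \coloneqq T_t$ as a discrete-time nonlinear Markov chain in the sense of the paper. First I would fix $t > 0$ once and for all and set $P = T_t$, so that $T_t^n f = P^n f$; the hypothesis then says exactly that $P^n f$ has an accumulation point at infinity, i.e.\ condition (8) holds for $P$. It remains to verify conditions (1), (2) and (4) for $P$, after which Theorem~\ref{thm:NLMC convergence 1} gives convergence of $f_n = P^n f - P^n f(x_0)$ to some $g$, and a short additional step upgrades this to convergence of $T_t^n f$ itself (using that the limit $g$ satisfies $Pg = g + \text{const}$ and the $L^\infty$-contractivity to pin the constant, or simply noting convergence of $f_n$ plus boundedness of $P^n f(x_0)$'s increments along the relevant subsequences, exactly as in the proof of Theorem~\ref{thm:NLMC convergence 1}).

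The verification of the three structural conditions is where the content lies. Monotonicity (1) is immediate: the sub-Markovian property of the Dirichlet form says precisely that $T_t$ is order preserving, so $Pf \ge Pg$ whenever $f \ge g$. Constant additivity (4), $P(f+c) = Pf + c$, I would derive from the scaling/translation invariance $\mathscr{E}(f+a) = \mathscr{E}(f)$ of equation~(\ref{eq: property constant additivity}): if $u(t)$ solves the gradient flow $0 \in \dot u + \partial\mathscr{E}(u)$ with $u(0) = u_0$, then $\partial\mathscr{E}(u+a) = \partial\mathscr{E}(u)$ forces $u(t) + a$ to solve the flow with initial datum $u_0 + a$, so $T_t(u_0+a) = T_t u_0 + a$ by uniqueness of the contraction-semigroup trajectory. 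Strict monotonicity (2), $Pf(x) > Pg(x)$ when $f \ge g$ and $f(x) > g(x)$, is the delicate one and is where the boundedness of the generator enters: on the finite-dimensional space $\mathbb{R}^N$ with $\partial\mathscr{E}$ (or a selection thereof) bounded, the flow is governed by a differential inclusion with a bounded right-hand side, so $u(t) - v(t)$ cannot collapse a strict pointwise gap instantaneously; quantitatively, if $f - g \ge \delta 1_x$ then $T_s f - T_s g \ge \delta 1_x - 2 s M \cdot \mathbf{1}$ for small $s$ where $M$ bounds the generator, and combining this with monotonicity propagated over the remaining time $t - s$ (together with irreducibility of the underlying graph, which is implicit so that information at $x$ spreads) yields $T_t f(x) > T_t g(x)$. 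I would model this argument on the resolvent computation in Lemma~\ref{lem:strict monotonicity}, or alternatively pass through the resolvent $J_\epsilon = (id - \epsilon\,\partial\mathscr{E})^{-1}$ and the Euler approximation $T_t = \lim_{n} J_{t/n}^n$ (Crandall--Liggett), transferring strict monotonicity of each $J_\epsilon$ to $P$.

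The main obstacle I anticipate is precisely establishing strict monotonicity (2): the semigroup is only defined through a differential inclusion, so one has no explicit kernel, and one must control how fast a strict gap can be dissipated. The boundedness hypothesis on the generator is the lever that makes this quantitative, but turning it into the clean statement ``$f \ge g$, $f(x) > g(x)$ $\Rightarrow$ $T_t f(x) > T_t g(x)$'' requires either a Gronwall-type estimate for $\|(T_s f - T_s g) - (f-g)\|_\infty$ together with an argument that the gap, once it survives a short time, is preserved and propagated by monotonicity, or the Crandall--Liggett reduction to the resolvent. A secondary, more routine, point is the final bookkeeping step converting ``$f_n \to g$ and $Pg = g + \lambda$'' into genuine convergence of $T_t^n f$; here one argues as in Theorem~\ref{thm:NLMC convergence 1} that $\lambda$ must be $0$ because $T_t^n f$ has a \emph{finite} accumulation point (condition~(8) as stated gives a finite accumulation point for $f_n$, and one checks the linear growth rate vanishes), so that $P^n f \to g$ outright.
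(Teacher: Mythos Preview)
Your proposal follows essentially the same route as the paper: set $P=T_t$, verify (1) from order preservation of the sub-Markovian semigroup, (4) from the translation invariance $\mathscr{E}(f+a)=\mathscr{E}(f)$, (2) from boundedness of the generator, and then invoke Theorem~\ref{thm:NLMC convergence 1}. The paper's proof is in fact a single short paragraph doing exactly this, with no further elaboration on the strict-monotonicity step beyond the sentence ``since the associated generator is bounded, $P$ satisfies strict monotonicity (2)''; your Gronwall / Crandall--Liggett sketches go further than the paper does.

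One correction to your final paragraph: you should not expect $\lambda=0$ or outright convergence of $P^n f$. Condition~(8) only gives a finite accumulation point for $f_n=P^n f-P^n f(x_0)$, not for $P^n f$ itself, so the linear growth rate $\lambda$ from Theorem~\ref{thm:NLMC convergence 1} need not vanish (compare Remark~\ref{rem: NLMC }(f)). The paper's conclusion ``$T_t^n f$ converges'' is to be read in the sense of Theorem~\ref{thm:NLMC convergence 1}, namely $f_n\to g$; the paper simply applies that theorem and stops, so your extra bookkeeping step is unnecessary and, as you have phrased it, incorrect.
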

\begin{proof}
Define the Markov chain as $P\coloneqq T_{t}$. By the definition
of a Dirichlet form, we know $P$ satisfying monotonicity (1) and
non-expansion (5). And the property (\ref{eq: property constant additivity})
induces the constant additivity (4) of $P$. Since the associated
generator $-\partial\mathscr{E}$ is bounded, then $P$ satisfies strict monotonicity of corresponding components (2).
By the assumption of accumulation points at infinity, we can
get the convergence result by Theorem \ref{thm:NLMC convergence 1}. 
\end{proof}
\begin{rem}
(a) In the nonlinear Dirichlet form setting of \cite{J98}, if we
assume the associated semigroup satisfying sub-Markovian property,
then we can also apply Theorem \ref{thm:NLMC convergence 1} to obtain
convergence results with the accumulation point at infinity assumption
(8).

(b) As we mentioned before, we can study the long-time behavior of
the resolvents of $p$-Laplace and hypergraph Laplace \cite{IKTU21}.

(c) Note that our nonlinear Markov chain setting is more general.
Since in the nonlinear Dirichlet form setting, the associated generator
is required for a kind of reversibility, while our Markov chain does
not require it. Moreover, our underlying space is more general than
the nonlinear Dirichlet form setting, which requires $L^{2}$ space. 
\end{rem}

\subsection{\label{subsec:The-nonlinear-Perron=00003D002013Frobenius}The nonlinear
Perron-Frobenius theory.}

The classical Perron-Frobenius theorem shows that a nonnegative
matrix has a nonnegative eigenvector associated with its spectral
radius, and if the matrix is irreducible then this nonnegative eigenvector
can be chosen strictly positive. There are many nonlinear generalizations.

For example, in \cite{M05}, the author lets $K$ be a proper cone
in $\mathbb{R}^{N}$, that is, $\alpha K\subset K$ for all $\alpha\in\mathbb{R}_{+}$,
it is closed and convex, $K-K=\mathbb{R}^{N}$, and $K\cap-K=\left\{ 0\right\} $.
Then $K$ induces a partial ordering $x\leq y$ on $K$ defined by
$x-y\in K$. Consider maps satisfying:

(M1) $\Lambda:K\to K,K{^\circ}\to K{^\circ}.$

(M2) $\Lambda(\alpha x)=\alpha\Lambda(x)$ for all $\alpha\geq0$
and $x\in K$ .

(M3) $x\leq y$ implies $\Lambda(x)\leq\Lambda(y)$ for all $x,y\in K$.

(M4) $\Lambda$ is locally Lipschitz continuous near $0$.

Sufficient conditions for the existence and uniqueness of eigenvectors
in the interior of a cone $K$ are developed even when eigenvectors
at the boundary of the cone exist \cite[Theorem 25, Theorem 28]{M05}. 
\begin{thm}
\label{thm: nonlinear Perron=00003D002013Frobenius}Let $K$ be the
positive function set $\mathbb{R}_{>0}^{N}$ and $$Pf\coloneqq\frac{1}{2}\text{log}\left(\text{exp}(f)\cdot\Lambda\left(\text{exp}(f)\right)\right),$$
where $\Lambda$ is defined as above. If $P^{n}f$ has an accumulation
point at infinity (8), then $P^{n}f$ converges. 
\end{thm}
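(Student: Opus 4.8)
The plan is to move $P$ to the open positive cone $K^{\circ}=\mathbb{R}_{>0}^{N}$ by the substitution $u=\exp(f)$, under which $P=\log\circ T\circ\exp$ for the geometric-mean map $T(u)(x)=\bigl(u(x)\,\Lambda(u)(x)\bigr)^{1/2}$, and then simply to check that $P$ satisfies the three hypotheses (1), (2), (4) of Theorem \ref{thm:NLMC convergence 1}; the conclusion will then follow from that theorem. First note that $P$ is well defined as a map $\mathbb{R}^{N}\to\mathbb{R}^{N}$: for $f\in\mathbb{R}^{N}$ we have $\exp(f)\in K^{\circ}$, so (M1) gives $\Lambda(\exp f)\in K^{\circ}$ and the logarithm is finite. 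Writing the defining formula componentwise as $Pf(x)=\tfrac12 f(x)+\tfrac12\log\Lambda(\exp f)(x)$ will make the three checks transparent.

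The checks use only (M1)--(M3). Constant additivity (4) is immediate from the degree-one homogeneity (M2): since $\Lambda(e^{C}\exp f)=e^{C}\Lambda(\exp f)$, one gets $P(f+C)=\tfrac12(f+C)+\tfrac12\log\bigl(e^{C}\Lambda(\exp f)\bigr)=Pf+C$. Monotonicity (1) follows from (M3): if $f\ge g$ then $\exp f\ge\exp g$, hence $\Lambda(\exp f)\ge\Lambda(\exp g)$, and both summands above are monotone in $f$. The decisive property is strict monotonicity (2): if $f\ge g$ with $f(x)>g(x)$ at some $x$, then the first summand obeys $\tfrac12 f(x)>\tfrac12 g(x)$ strictly, while the second is weakly larger by (M3), so $Pf(x)>Pg(x)$. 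Thus the explicit identity factor $u$ in $T(u)=\sqrt{u\,\Lambda(u)}$ is exactly what supplies the strictness (the ``laziness'') that $\Lambda$ alone need not possess; this is the ``replacement of maps'' that brings the Perron--Frobenius setting into our framework. Finally, by Remark \ref{rem: NLMC }(c), (1) and (4) give non-expansion (5), so $P$ is a nonlinear Markov chain on $\Omega=\mathbb{R}^{N}$.

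With (1), (2), (4) verified and the standing hypothesis (8) that $f_{n}=P^{n}f-P^{n}f(x_{0})$ has a finite accumulation point $g$, Theorem \ref{thm:NLMC convergence 1} applies verbatim and delivers the asserted convergence of $P^{n}f$, namely $f_{n}=P^{n}f-P^{n}f(x_{0})\to g$. To extract the Perron--Frobenius content, recall that Theorem \ref{thm:NLMC convergence 1} furnishes a constant $\lambda$ with $Pg=g+\lambda$; exponentiating this identity gives $\exp g\cdot\Lambda(\exp g)=(\exp g)^{2}e^{2\lambda}$, and dividing by $\exp g\in K^{\circ}$ yields $\Lambda(\exp g)=e^{2\lambda}\exp g$. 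Hence $\exp g$ is a positive eigenvector of $\Lambda$ in the interior $K^{\circ}$ with eigenvalue $e^{2\lambda}$, which is precisely the interior eigenvector predicted by the nonlinear Perron--Frobenius theory of \cite{M05}.

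The one step warranting care is strict monotonicity (2), since $\Lambda$ is assumed only monotone, not strictly so; everything hinges on recognizing that the geometric mean carries the identity factor $u$, whose logarithm contributes the strict term $\tfrac12 f(x)$. Once this is in place, the remaining verifications and the appeal to Theorem \ref{thm:NLMC convergence 1} are routine.
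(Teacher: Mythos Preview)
Your proof is correct and follows essentially the same approach as the paper: you rewrite $Pf=\tfrac12 f+\tfrac12\log\Lambda(\exp f)$ (which is exactly the paper's decomposition $Pf=\tfrac12(f+\tilde{P}f)$ with $\tilde{P}f=\log\Lambda(\exp f)$), verify (1), (2), (4) via (M2), (M3), and the explicit $\tfrac12 f$ term, and then invoke Theorem~\ref{thm:NLMC convergence 1}. Your additional paragraph extracting the positive eigenvector $\exp g$ with eigenvalue $e^{2\lambda}$ is a nice bonus that the paper does not spell out.
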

\begin{proof}
Since $\Lambda$ satisfies (M2) and (M3), then $\tilde{P}f\coloneqq\text{log}\left(\Lambda\left(\text{exp}(f)\right)\right)$
satisfies constant additivity (4) and monotonicity (1). And $$Pf=\frac{f+\tilde{P}f}{2}=\frac{1}{2}\text{log}\left(\text{exp}(f)\cdot\Lambda\left(\text{exp}(f)\right)\right)$$
satisfies constant additivity (4) and strict monotonicity of corresponding components (2). Then
we can apply Theorem \ref{thm:NLMC convergence 1} to get the convergence
result with the accumulation points assumption. 
\end{proof}
We next introduce a nonlinear generalization that can be applicable to a specific case relevant to the Ollivier Ricci flow. In
\cite{B09}, the author considers maps $f_{\mathcal{K}}(v)=\text{min}_{A\in\mathcal{K}}Av$,
where $\mathcal{K}$ is a finite set of nonnegative matrices and "min"
means component-wise minimum. In particular, he shows the existence
of nonnegative generalized eigenvectors of $f_{\mathcal{K}}$, and provides necessary and sufficient conditions for the existence of a strictly positive eigenvector. These results apply to our Ollivier Ricci flow (\ref{eq: LLL curvature flow}) and cover the non-connected case. However, the long-term behavior is not addressed.

\section{The Ollivier Ricci curvature of nonlinear Markov chains}

In this section, we introduce a definition of Ollivier Ricci curvature
of nonlinear Markov chains according to the Lipschitz decay property.
Then we can get the convergence results for the nonlinear Markov chain
with a nonnegative Ollivier Ricci curvature. And we can also define
the Laplacian separation flow of a nonlinear Markov chain with $Ric_{1}(P,d)\geq0$.
Then several examples show that the definition is consistent with
the classical Ollivier Ricci curvature (\ref{Ollivier_curvature}),
sectional curvature \cite{CMS24}, coarse Ricci curvature on hypergraphs
\cite{IKTU21} and the modified Ollivier Ricci curvature $\hat{k}_{p}$
for $p$-Laplace (\ref{eq: modified Ollivier Ricci curvature}). 
\begin{defn}
Let $P:\mathbb{R}^{V}\to\mathbb{R}^{V}$ be a nonlinear Markov chain on $G=(V,E)$ with (1) monotonicity, (2) strict monotonicity of corresponding components and (4) constant additivity,
and let $d:V^{2}\to[0,+\infty)$ be the distance function. For $r>0$,
define 
\[
Ric_{r}(P,d)\coloneqq1-\underset{Lip(f)\leq r}{\text{sup}}\frac{Lip(Pf)}{r}.
\]
That is, if $Lip(f)=r$, then $Lip(Pf)\leq(1-Ric_{r})Lip(f)$. 
\end{defn}
By Theorem \ref{thm:NLMC convergence 1}, we can get the following
corollary. 
\begin{cor}
\label{cor:1}Let $r>0$, and assume $(P,d)$ is a nonlinear Markov
chain with $Ric_{r}\geq0$. Let $x_{0}\in V$. Then for all $f\in\mathbb{R}^{V}$ with
$Lip(f)\leq r$, there exists $g\in\mathbb{R}^{V}$ such that 
\[
P^n f - P^n f(x_0) \cdot \vec{1} \to g
\quad \text{and} \quad
P^n f - P^{n-1} f \to \text{const}
\quad \text{as } n \to \infty.
\]
In particular, \( Pg = g + \text{const} \cdot \vec{1} \).
\end{cor}
\begin{proof}
By the definition of $Ric_{r}\geq0$, we can get the accumulation
point at infinity (8) as $Lip(P^{n}f)\leq r$ for all $n$, and by
compactness. Applying Theorem~\ref{thm:NLMC convergence 1}, the result follows.
\end{proof}
Then we want to define the Laplacian separation flow on a nonlinear
Markov chain $(P,d)$ with $Ric_{1}(P,d)\geq0$. Let $V=X\cup K\cup Y$, where $K$ is finite, and suppose $d$ such that
$d(x,y)=\underset{z\in K}{\text{inf}}d(x,z)+d(z,y)$ for
all $x\in X$ and $y\in Y$. Intuitively that means that $K$ separates
$X$ from $Y$. Recall the extremal 1-Lipschitz extension operator
defined as $S:\mathbb{R}^{K}\to\mathbb{R}^{V}$, 
\[
Sf(x)\coloneqq\begin{cases}
\begin{array}{c}
f(x):\\
\underset{y\in K}{\text{min}}\left(f(y)+d(x,y)\right):\\
\underset{y\in K}{\text{max}}\left(f(y)-d(x,y)\right):
\end{array} & \begin{array}{c}
x\in K,\\
x\in Y,\\
x\in X.
\end{array}\end{cases}
\]
Then $S(Lip(1,K))\subseteq Lip(1,V)$. Next, we can get the following
lemma. 
\begin{lem}
\label{lem:Laplacian separation flow of NLMC}Assume $(P,d)$ is a
nonlinear Markov chain with $Ric_{1}(P,d)\geq0$. Define $\tilde{P}:\mathbb{R}^{K}\to\mathbb{R}^{K}$
as $\tilde{P}f=\left(PSf\right)\mid_{K}$. Then $Ric_{1}(\tilde{P},d\mid_{K\times K})\geq0$. 
\end{lem}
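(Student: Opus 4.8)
The plan is to verify two things: that $\tilde P$ is a nonlinear Markov chain with properties (1), (2) and (4), so that $Ric_{1}(\tilde P, d\mid_{K\times K})$ is defined at all, and then the Lipschitz contraction $Lip(\tilde P f)\le 1$ whenever $Lip(f)\le 1$. Throughout I write $Lip_{d}$ for the Lipschitz constant of a function on $V$ with respect to $d$, and $Lip_{d\mid_{K}}$ for the Lipschitz constant of a function on $K$ with respect to $d\mid_{K\times K}$. Both parts are obtained by transporting the corresponding statements for $P$ through the extension operator $S$, using three elementary properties visible in the defining formula of $S$: it is monotone ($f\ge g$ on $K$ implies $Sf\ge Sg$ on $V$), constant additive ($S(f+c)=Sf+c$ for $c\in\mathbb R$), and equal to the identity on $K$ ($Sf\mid_{K}=f$).

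For the structural conditions, $\tilde P$ inherits monotonicity from $Sf\ge Sg\Rightarrow PSf\ge PSg$ followed by restriction to $K$; constant additivity from $\tilde P(f+c)=(P(Sf+c))\mid_{K}=(PSf+c)\mid_{K}=\tilde Pf+c$; non-expansion (5) then from (1) and (4) as observed after the definitions; and strict monotonicity from the fact that if $f\ge g$ on $K$ with $f(x)>g(x)$ for some $x\in K$, then $Sf\ge Sg$ on $V$ while $Sf(x)=f(x)>g(x)=Sg(x)$ at that same vertex $x$, because $S$ acts as the identity on $K$; property (2) for $P$ then gives $PSf(x)>PSg(x)$, i.e. $\tilde Pf(x)>\tilde Pg(x)$. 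Hence $\tilde P$ is a nonlinear Markov chain of the kind for which $Ric_{1}$ is defined.

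For the curvature bound, take $f\in\mathbb R^{K}$ with $Lip_{d\mid_{K}}(f)\le 1$, i.e. $f\in Lip(1,K)$. By the extension property $S(Lip(1,K))\subseteq Lip(1,V)$ recalled above, $Sf\in Lip(1,V)$, so $Lip_{d}(Sf)\le 1$. Since $Ric_{1}(P,d)\ge 0$, the definition of $Ric_{1}$ applied to the function $Sf$ gives $Lip_{d}(PSf)\le 1$. Finally, for all $x,y\in K$,
\[
\lvert \tilde P f(x)-\tilde P f(y)\rvert=\lvert PSf(x)-PSf(y)\rvert\le Lip_{d}(PSf)\,d(x,y)\le d(x,y),
\]
so $\tilde P f\in Lip(1,K)$, that is $Lip_{d\mid_{K}}(\tilde P f)\le 1$. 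Since $f$ was arbitrary with $Lip_{d\mid_{K}}(f)\le 1$, we conclude $\sup_{Lip_{d\mid_{K}}(f)\le 1}Lip_{d\mid_{K}}(\tilde P f)\le 1$, i.e. $Ric_{1}(\tilde P, d\mid_{K\times K})\ge 0$.

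There is no genuine obstacle here: the lemma unwinds directly from the definitions. The only two points deserving a little care are that restricting a function from $V$ to $K$ cannot increase its Lipschitz constant (the supremum defining $Lip_{d\mid_{K}}$ runs over a subset of the vertex pairs entering $Lip_{d}$), and that strict monotonicity of $P$ must survive precomposition with $S$. The latter works precisely because $S$ is the identity on $K$, so a strict pointwise gap at a vertex of $K$ is preserved by $S$ before $P$ is applied.
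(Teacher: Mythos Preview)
Your proof is correct and follows essentially the same approach as the paper: use $S(Lip(1,K))\subseteq Lip(1,V)$ together with $Ric_{1}(P,d)\ge 0$ to conclude that $\tilde P$ preserves $Lip(1,K)$. You are in fact more thorough than the paper, which omits the verification that $\tilde P$ inherits properties (1), (2), (4) from $P$ via $S$; your observation that strict monotonicity survives because $S$ acts as the identity on $K$ is a genuinely useful clarification.
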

\begin{proof}
Since $S(Lip(1,K))\subseteq Lip(1,V)$, i.e., $Sf\in Lip(1,V)$, and
by the definition of $Ric_{1}(P,d)\geq0$, we can get $Ric_{1}(\tilde{P},d\mid_{K\times K})\geq0$. 
\end{proof}
Combining Corollary \ref{cor:1}, we can get the Laplacian separation
result on the nonlinear Markov chain. 
\begin{cor}
\label{cor:2}Let $(P,d)$ be a nonlinear Markov chain with $V=X\cup K\cup Y$.
Assume $Ric_{1}(P,d)\geq0$, then there exist $f\in\mathbb{R}^{V}$
and $C\in\mathbb{R}$ such that $f=Sf\coloneqq S(f\mid_{K})$ and
\[
\Delta f\begin{cases}
\begin{array}{c}
=C,\:\text{on }K,\\
\leq C,\:\text{on }Y,\\
\geq C,\:\text{on }X,
\end{array}\end{cases}
\]
where $\Delta\coloneqq P-id$. 
\end{cor}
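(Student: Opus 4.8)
The plan is to combine Lemma \ref{lem:Laplacian separation flow of NLMC} with Corollary \ref{cor:1} applied to the auxiliary chain $\tilde P$ on $\mathbb{R}^K$, and then to propagate the resulting fixed point identity back to $\mathbb{R}^V$ via $S$. First I would invoke Lemma \ref{lem:Laplacian separation flow of NLMC} to get that $\tilde P f = (PSf)|_K$ satisfies $Ric_1(\tilde P, d|_{K\times K})\ge 0$. Since $\tilde P$ is a nonlinear Markov chain with monotonicity, strict monotonicity and constant additivity (inherited from $P$ and $S$), Corollary \ref{cor:1} with $r=1$ applies to $\tilde P$: for any $h\in \mathbb{R}^K$ with $Lip(h)\le 1$ there is some $\tilde g\in\mathbb{R}^K$ with $\tilde P^n h - \tilde P^n h(x_0)\to \tilde g$ and, in particular, $\tilde P\tilde g = \tilde g + C$ for some constant $C\in\mathbb{R}$.

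Next I would unfold the identity $\tilde P\tilde g = \tilde g + C$. By definition $\tilde P\tilde g = (PS\tilde g)|_K$, so $(PS\tilde g)|_K = \tilde g + C = (S\tilde g)|_K + C$. Set $f\coloneqq S\tilde g = S(\tilde g)$; then $f = Sf$ by idempotency of $S$ on $Lip(1,K)$ (since $(Sf)|_K = \tilde g$ and $S$ reconstructs the same extremal extension), and the identity reads $Pf|_K = f|_K + C$, i.e. $\Delta f = Pf - f = C$ on $K$. It remains to control $\Delta f$ on $X$ and $Y$. Here I would use that $f = Sf$ is the extremal $1$-Lipschitz extension: $f|_Y$ is the pointwise \emph{minimal} $1$-Lipschitz extension (the $\min$ formula) and $f|_X$ is the \emph{maximal} one (the $\max$ formula). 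Because $Ric_1(P,d)\ge 0$ gives the non-expansion / Lipschitz-decay property $Lip(Pf)\le Lip(f)\le 1$, the function $Pf$ is again $1$-Lipschitz on $V$; combined with $Pf|_K = f|_K + C$, monotonicity of $P$ lets one compare $Pf$ to $S((f|_K)+C) = f + C$ on $X$ and $Y$. On $Y$, where $f$ is the minimal $1$-Lipschitz extension of $f|_K$, we get $Pf \le f + C$ hence $\Delta f \le C$; on $X$, where $f$ is maximal, we get $Pf \ge f + C$ hence $\Delta f \ge C$. This is exactly the claimed Laplacian separation.

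The main obstacle I expect is the last step: justifying the inequalities $\Delta f\le C$ on $Y$ and $\Delta f\ge C$ on $X$ rigorously. The point is that $Pf$ must itself be a $1$-Lipschitz function (from $Ric_1\ge 0$) agreeing with $f+C$ on $K$, and $f+C = S((f|_K)+C)$ is the \emph{extremal} such function — maximal on $Y$, minimal on $X$ — so any competing $1$-Lipschitz function with the same boundary values on $K$, in particular $Pf$, is $\le f+C$ on $Y$ and $\ge f+C$ on $X$. I would need to check carefully that $Pf$ really is globally $1$-Lipschitz and not merely $1$-Lipschitz on $K$; this follows since $P$ acts on $\mathbb{R}^V$ with $Ric_1(P,d)\ge 0$, but the precise bookkeeping of how $S$, the restriction to $K$, and the Lipschitz-decay interact is the delicate part. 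Everything else — the existence of $\tilde g$, constant additivity, $f = Sf$ — is a direct application of the already-established results.
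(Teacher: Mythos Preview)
Your approach is essentially the paper's: apply Corollary~\ref{cor:1} (via Lemma~\ref{lem:Laplacian separation flow of NLMC}) to $\tilde P$ on $K$ to get $\tilde g$ with $\tilde P\tilde g=\tilde g+C$, set $f=S\tilde g$, and then compare $Pf$ to the extremal extension $f+C$ on $X$ and $Y$ using $Lip(Pf)\le 1$. The paper phrases the last step as $SPf=f+C$ together with $SPf\le Pf$ on $X$ and $SPf\ge Pf$ on $Y$, which is the same comparison.

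One slip to fix: in your second paragraph you say $f|_Y$ is the \emph{minimal} extension (because of the $\min$ formula) and $f|_X$ the \emph{maximal} one (the $\max$ formula), but it is the other way around. The formula $\min_{y\in K}(f(y)+d(x,y))$ is the \emph{largest} $1$-Lipschitz extension on $Y$, and $\max_{y\in K}(f(y)-d(x,y))$ is the \emph{smallest} on $X$. You silently correct this in the third paragraph (``maximal on $Y$, minimal on $X$''), and the inequalities you derive there are the right ones, so the argument goes through --- but make the two paragraphs consistent.
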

\begin{proof}
By Corollary \ref{cor:1}, there exists $g\in Lip(1,K)$ such that
on $K$, 
\[
PSg=g+\text{const}\cdot \overrightarrow{1}.
\]
Let $f=Sg$. Clearly, $f=S(f\mid_{K})$, and on $K$, 
\[
Pf=f+\text{const}\cdot \overrightarrow{1},
\]
i.e., $\Delta f=\text{const}\cdot \overrightarrow{1}$. Moreover, 
\[
SPf=SPSg=Sg+S(\text{const}\cdot \overrightarrow{1})=f+S(\text{const}\cdot \overrightarrow{1}).
\]
Then on $X$, we have $SPf\leq Pf$ as $S$ is the minimum Lipschitz
extension on $X$. Hence, $Pf\geq f+\text{const}\cdot \overrightarrow{1}$, i.e., $\Delta f\geq \text{const}\cdot \overrightarrow{1}$ on
$X$. Similarly, $\Delta f\leq \text{const}\cdot \overrightarrow{1}$ on $Y$, finishing the proof. 
\end{proof}
Next, the following examples show that our Ollivier Ricci curvature
definition is consistent with other settings. 
\begin{example}
(a) Let $P$ be a linear Markov chain, then $Ric_{r}$ is the classical
Ollivier Ricci curvature $\kappa$, see definition (\ref{Ollivier_curvature}).

(b) Let $\tilde{P}$ be a linear Markov chain and define $P(\cdot)=\text{log}\tilde{P}\text{exp}(\cdot)$,
then $Ric_{r}(P,d)\geq0$ for all $r>0$ if the sectional curvature
$\kappa_{sec}\geq0$, see \cite{CMS24}.

(c) Let $P$ be the resolvent of hypergraph Laplace, then $Ric_{r}\geq0$
for all $r>0$ if the coarse Ricci curvature of hypergraphs $\kappa\geq0$,
see \cite{IKTU21}.

(d) Let $P$ be the resolvent of $p$-Laplace, then $Ric_{r}\geq0$
for all $r>0$ if the modified Ollivier Ricci curvature of $p$-Laplace
$\hat{k}_{p}\geq0$, see definition (\ref{eq: modified Ollivier Ricci curvature})
in the introduction. 
\end{example}
\begin{rem}
For the above examples (a)-(d) with $Ric_{1}(P,d)\geq0$, by Corollary
\ref{cor:2} the Laplacian separation flow can be defined respectively. 
\end{rem}
$\mathbf{\boldsymbol{\mathbf{Acknowledgements}\mathbf{}\text{:}}}$
The authors would like to thank J\"urgen Jost, Bobo Hua and Tao Wang
for helpful discussions and suggestions. The authors sincerely thank for the referee's valuable feedback and for helping us enhance our work.

\bibliographystyle{plain}
\bibliography{NLMC_ref_copy}

\end{document}